\documentclass[10pt,journal,twocolumn]{IEEEtran}

\usepackage[english]{babel}
\usepackage[latin1]{inputenc}
\usepackage[T1]{fontenc}
\usepackage{url}
\usepackage{graphicx}
\usepackage{subfigure}
\usepackage{color}
\usepackage{amsmath,amsthm,amsfonts,amssymb}
\usepackage{colortbl}                   
\usepackage{url}
\usepackage{booktabs}
\usepackage{epsfig}
\usepackage{pstricks}
\usepackage{pst-node}
\usepackage{pst-grad}
\usepackage{pstricks-add}
\usepackage{ifthen}
\usepackage{algorithm}
\usepackage{algpseudocode}
\usepackage{float}
\newfloat{algorithm}{t}{lop}



\newboolean{draft}
\newcommand{\isdraft}[2]{\ifthenelse{\boolean{draft}}{#1}{#2}}


\isdraft{\usepackage{setspace}}{}                              
\isdraft{\usepackage[footnotesize]{caption}}{}                 
\isdraft{\usepackage{paralist}}{}                              

\theoremstyle{definition}

\theoremstyle{plain}
\newtheorem{Theorem}{Theorem}
\newtheorem{Corollary}{Corollary}
\newtheorem{Lemma}{Lemma}
\newtheorem{Assumption}{Assumption}

\definecolor{red}{rgb}{0.69921875,0.28125,0.28125}
\definecolor{green}{rgb}{0.265625,0.609375,0.265625}
\definecolor{blue}{rgb}{0.3828125,0.3828125,0.69140625}
\definecolor{lightred}{rgb}{0.9140625,0.8046875,0.8046875}
\definecolor{lightblue}{rgb}{0.87109375,0.87109375,0.9296875}
\definecolor{lightgreen}{rgb}{0.796875,0.91796875,0.796875}
\definecolor{lightyellow}{RGB}{232,238,184}
\definecolor{white}{RGB}{253,253,253}

\newcommand{\mypar}[1]{{\bf #1.}}

\isdraft{}{}

\title{Distributed Optimization With Local Domains: Applications in MPC and Network Flows}

\author{Jo\~ao F.~C.~Mota, Jo\~ao M.~F.~Xavier, Pedro M.~Q.~Aguiar, and~Markus P\"uschel
\IEEEcompsocitemizethanks{\IEEEcompsocthanksitem Jo\~ao F.~C.~Mota, Jo\~ao M.~F.~Xavier, and Pedro M.~Q.~Aguiar are with Instituto de Sistemas e Rob\'otica (ISR), Instituto Superior T\'ecnico (IST), Technical University of Lisbon, Portugal.
\IEEEcompsocthanksitem Markus P{\"u}schel is with the Department of
Computer Science at ETH Zurich, Switzerland.
\IEEEcompsocthanksitem Jo\~ao F.~C.~Mota is also with the Department of Electrical and Computer Engineering at Carnegie Mellon University, USA.}%
\thanks{This work was supported by the following grants from Funda\c{c}\~{a}o para a Ci\^{e}ncia e Tecnologia (FCT): CMU-PT/SIA/0026/2009, PEst-OE/EEI/LA0009/2011, and SFRH/BD/33520/2008 (through the Carnegie Mellon/Portugal Program managed by ICTI).}}

\begin{document}

\maketitle

\begin{abstract}
	In this paper we consider a network with~\boldmath{$P$} nodes, where each node has exclusive access to a local cost function. Our contribution is a communication-efficient distributed algorithm that finds a vector~\boldmath{$x^\star$} minimizing the sum of all the functions. We make the additional assumption that the functions have intersecting local domains, i.e., each function depends only on some components of the variable. Consequently, each node is interested in knowing only some components of~\boldmath{$x^\star$}, not the entire vector. This allows for improvement in communication-efficiency. We apply our algorithm to model predictive control (MPC) and to network flow problems and show, through experiments on large networks, that our proposed algorithm requires less communications to converge than prior algorithms.
\end{abstract}

\begin{keywords}
  Distributed algorithms, alternating direction method of multipliers (ADMM), Model Predictive Control, network flow, multicommodity flow, sensor networks.
\end{keywords}

\isdraft{\pagebreak}{}

\section{Introduction}
\label{Sec:Intro}

	Distributed algorithms have become popular for solving optimization problems formulated on networks. Consider, for example, a network with~$P$ nodes and the following problem:
	\begin{equation}\label{Eq:SeparableOptim}
		\underset{x \in \mathbb{R}^n}{\text{minimize}} \,\,\, f_1(x) + f_2(x) + \cdots + f_P(x)\,,
	\end{equation}
	where~$f_p$ is a function known only at node~$p$. Fig.~\ref{SubFig:GlobalDomains} illustrates this problem for a variable~$x$ of size~$n = 3$. Several algorithms have been proposed to solve~\eqref{Eq:SeparableOptim} in a distributed way, that is, each node communicates only with its neighbors and there is no central node. In a typical distributed algorithm for~\eqref{Eq:SeparableOptim}, each node holds an estimate of a solution~$x^\star$, and iteratively updates and exchanges it with its neighbors. It is usually assumed that all nodes are interested in knowing the entire solution~$x^\star$. While such an assumption holds for problems like consensus~\cite{Rabbat04-DistributedOptimizationSensorNetworks} or distributed SVMs~\cite{Forero10-ConsensusBasedDistributedSVMs}, there are important problems where it does not hold, especially in the context of large networks. Two examples we will explore here are distributed Model Predictive Control (MPC) and network flows. The goal in distributed MPC is to control a network of interacting subsystems with coupled dynamics~\cite{Camponogara02-DistributedMPC}. That control should be performed using the least amount of energy. Network flow problems have many applications~\cite{Ahuja93-NetworkFlows}; here, we will solve a network flow problem to minimize delays in a multicommodity routing problem. Both distributed MPC and network flow problems can be written naturally as~\eqref{Eq:SeparableOptim} with functions that depend only on a subset of the components of~$x$.

	\begin{figure}[t]
  \centering
  \subfigure[Global variable]{\label{SubFig:GlobalDomains}
    \psscalebox{0.865}{
      \begin{pspicture}(4.5,4.8)
        \def\nodesimp{
          \pscircle*[linecolor=black!65!white](0,0){0.3}
        }

        \rput(0.4,4.1){\rnode{C1}{\nodesimp}}   \rput(0.4,4.1){\small \textcolor{white}{$1$}}
        \rput(0.9,2.6){\rnode{C2}{\nodesimp}}   \rput(0.9,2.6){\small \textcolor{white}{$2$}}
        \rput(0.8,0.9){\rnode{C3}{\nodesimp}}   \rput(0.8,0.9){\small \textcolor{white}{$3$}}
        \rput(3.0,1.1){\rnode{C4}{\nodesimp}}   \rput(3.0,1.1){\small \textcolor{white}{$4$}}
        \rput(4.0,2.6){\rnode{C5}{\nodesimp}}   \rput(4.0,2.6){\small \textcolor{white}{$5$}}
        \rput(2.2,3.3){\rnode{C6}{\nodesimp}}   \rput(2.2,3.3){\small \textcolor{white}{$6$}}

        \ncline[nodesep=0.33cm,linewidth=0.9pt]{-}{C1}{C2}
        \ncline[nodesep=0.33cm,linewidth=0.9pt]{-}{C1}{C6}
        \ncline[nodesep=0.33cm,linewidth=0.9pt]{-}{C2}{C3}
        \ncline[nodesep=0.33cm,linewidth=0.9pt]{-}{C2}{C6}
        \ncline[nodesep=0.33cm,linewidth=0.9pt]{-}{C3}{C4}
        \ncline[nodesep=0.33cm,linewidth=0.9pt]{-}{C4}{C5}
        \ncline[nodesep=0.33cm,linewidth=0.9pt]{-}{C5}{C6}

        \rput[lb](-0.11,4.45){ $f_1(x_1,x_2,x_3)$}
        \rput[lt](1.1,2.4){$f_2(x_1,x_2,x_3)$}
        \rput[lt](0.1,0.56){$f_3(x_1,x_2,x_3)$}
        \rput[t](3.32,0.75){$f_4(x_1,x_2,x_3)$}
        \rput[b](4.05,3.00){$f_5(x_1,x_2,x_3)$}
        \rput[b](2.52,3.68){$f_6(x_1,x_2,x_3)$}

      \end{pspicture}
    }
  }
  \hfill
  \subfigure[Partial variable]{\label{SubFig:PartialDomains}
    \psscalebox{0.865}{
      \begin{pspicture}(4.5,4.8)
        \def\nodesimp{
          \pscircle*[linecolor=black!65!white](0,0){0.3}
        }

        \rput(0.4,4.1){\rnode{C1}{\nodesimp}}   \rput(0.4,4.1){\small \textcolor{white}{$1$}}
        \rput(0.9,2.6){\rnode{C2}{\nodesimp}}   \rput(0.9,2.6){\small \textcolor{white}{$2$}}
        \rput(0.8,0.9){\rnode{C3}{\nodesimp}}   \rput(0.8,0.9){\small \textcolor{white}{$3$}}
        \rput(3.0,1.1){\rnode{C4}{\nodesimp}}   \rput(3.0,1.1){\small \textcolor{white}{$4$}}
        \rput(4.0,2.6){\rnode{C5}{\nodesimp}}   \rput(4.0,2.6){\small \textcolor{white}{$5$}}
        \rput(2.2,3.3){\rnode{C6}{\nodesimp}}   \rput(2.2,3.3){\small \textcolor{white}{$6$}}

        \ncline[nodesep=0.33cm,linewidth=0.9pt]{-}{C1}{C2}
        \ncline[nodesep=0.33cm,linewidth=0.9pt]{-}{C1}{C6}
        \ncline[nodesep=0.33cm,linewidth=0.9pt]{-}{C2}{C3}
        \ncline[nodesep=0.33cm,linewidth=0.9pt]{-}{C2}{C6}
        \ncline[nodesep=0.33cm,linewidth=0.9pt]{-}{C3}{C4}
        \ncline[nodesep=0.33cm,linewidth=0.9pt]{-}{C4}{C5}
        \ncline[nodesep=0.33cm,linewidth=0.9pt]{-}{C5}{C6}

        \rput[lb](0.0,4.45){ $f_1(x_1,x_2)$}
        \rput[lt](1.1,2.4){$f_2(x_1,x_2,x_3)$}
        \rput[lt](0.2,0.56){$f_3(x_2,x_3)$}
        \rput[t](3.0,0.74){$f_4(x_1,x_3)$}
        \rput[b](3.9,2.97){$f_5(x_2,x_3)$}
        \rput[b](2.6,3.65){$f_6(x_1,x_2)$}

      \end{pspicture}
    }
  }
  \caption{
		Example of a $\text{(a)}$ global and a $\text{(b)}$ partial variable. While each function in $\text{(a)}$ depends on all the components of the variable $x = (x_1,x_2,x_3)$, each function in $\text{(b)}$ depends only on a subset of the components of $x$.
  }
  \label{Fig:IntroExamplesVariable}
  \end{figure}
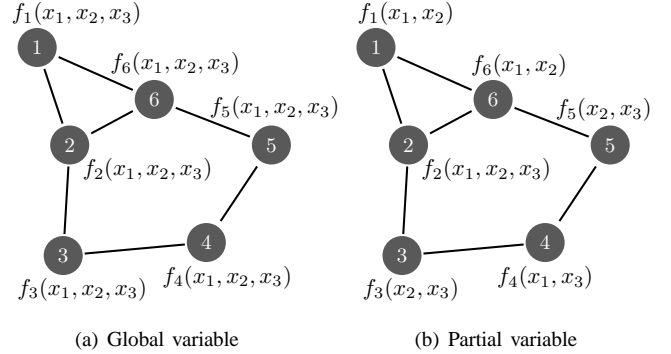

	We solve~\eqref{Eq:SeparableOptim} in the case that each function~$f_p$ may depend only on a subset of the components of the variable $x \in \mathbb{R}^n$. This situation is illustrated in Fig.~\ref{SubFig:PartialDomains}, where, for example, $f_1$ only depends on~$x_1$ and~$x_2$. To capture these dependencies, we write~$x_S$, $S \subseteq \{1,\ldots,n\}$, to denote a subset of the components of~$x$. For example, if~$S = \{2,4\}$, then $x_{S}=(x_2,x_4)$. With this notation, our goal is solving
	\begin{equation}\label{Eq:PartialProb}
		\underset{x \in \mathbb{R}^n}{\text{minimize}} \,\,\, f_1(x_{S_1}) + f_2(x_{S_2}) + \cdots + f_P(x_{S_P})\,,
	\end{equation}
	where~$S_p$ is the set of components the function~$f_p$ depends on. Accordingly, every node~$p$ is only interested in a part of the solution: $x^\star_{S_p}$. We make the following
	\begin{Assumption}\label{Ass:PartialVariable}
		No components are global, i.e., $\bigcap_{p=1}^P S_p = \emptyset$.
	\end{Assumption}
	Whenever this assumption holds, we say that the variable~$x$ is \textit{partial}. Fig.~\ref{SubFig:PartialDomains} shows an example of a partial variable. Note that, although no component appears in all nodes, node~$2$ depends on all components, i.e., it has a global domain. In fact, Assumption~\ref{Ass:PartialVariable} allows only a strict subset of nodes to have global domains. This contrasts with Fig.~\ref{SubFig:GlobalDomains}, where all nodes have global domains and hence Assumption~\ref{Ass:PartialVariable} does not hold. We say that the variable~$x$ in Fig.~\ref{SubFig:GlobalDomains} is \textit{global}. Clearly, problem~\eqref{Eq:PartialProb} is a particular case of problem~\eqref{Eq:SeparableOptim} and hence it can be solved with any algorithm designed for~\eqref{Eq:SeparableOptim}. This approach, however, may introduce unnecessary communications, since nodes exchange full estimates of~$x^\star$, and not just of the components they are interested in, thus potentially wasting useful communication resources.	In many networks, communication is the operation that consumes the most energy and/or time.

	\mypar{Contributions}
	We first formalize problem~\eqref{Eq:PartialProb} by making a clear distinction between variable dependencies and communication network. Before, both were usually assumed the same. Then, we propose a distributed algorithm for problem~\eqref{Eq:PartialProb} that takes advantage of its special structure to reduce communications. We will distinguish two cases for the variable of~\eqref{Eq:PartialProb}: connected and non-connected, and design algorithms for both. To our knowledge, this is the first time an algorithm has been proposed for a non-connected variable. We apply our algorithms to distributed MPC and to network flow problems. A surprising result is that, despite their generality, the proposed algorithms outperform prior algorithms even though they are application-specific.

%

	\mypar{Related work}
	Many algorithms have been proposed for the global problem~\eqref{Eq:SeparableOptim}, for example, gradient-based methods~\cite{Rabbat04-DistributedOptimizationSensorNetworks,CXVOptimInSPandComm,Jakovetic11-FastDistributedMethods}, or methods based on the \textit{Alternating Direction Method of Multipliers} (ADMM)~\cite{Zhu09-DistributedInNetworkChannelCoding,Schizas08-ConsensusAdHocWSNsPartI,Mota12-DADMM}. As mentioned before, solving~\eqref{Eq:PartialProb} with an algorithm designed for~\eqref{Eq:SeparableOptim} introduces unnecessary communications. We will observe this when we compare the algorithm proposed here with D-ADMM~\cite{Mota12-DADMM}, the state-of-the-art for~\eqref{Eq:SeparableOptim} in terms of communication-efficiency.

	To our knowledge, this is the first time problem~\eqref{Eq:PartialProb} has been explicitly stated in a distributed context. For example, \cite[\S7.2]{Boyd11-ADMM} proposes an algorithm for~\eqref{Eq:PartialProb}, but is not distributed in our sense. Namely, it either requires a platform that supports all-to-all communications (in other words, a central node), or requires running consensus algorithms on each induced subgraph, at each iteration~\cite[\S10.1]{Boyd11-ADMM}. Thus, that algorithm is only distributed when every component induces subgraphs that are stars. Actually, we found only one algorithm in the literature that is distributed (or that can easily be made distributed) for all the scenarios considered in this paper. That algorithm was proposed in~\cite{Kekatos12-DistributedRobustPowerStateEstimation} in the context of power system state estimation (the algorithm we propose can also be applied to this problem, although we will not consider it here). Our simulations show that the algorithm in~\cite{Kekatos12-DistributedRobustPowerStateEstimation} requires always more communications than the algorithm we propose.

	Although we found just one (communication-efficient) distributed algorithm solving~\eqref{Eq:PartialProb}, there are many other algorithms solving particular instances of it. For example, in network flow problems, each component of the variable is associated to an edge of the network. We will see such problems can be written as~\eqref{Eq:PartialProb} with a connected variable, in the special case where each induced subgraph is a star. In this case, \cite[\S7.2]{Boyd11-ADMM} becomes distributed, and also gradient/subgradient methods can be applied directly either to the primal problem~\cite{Tsitsiklis84-DistributedAsynchronousOptimalRouting} or to the dual problem~\cite{Zargham12-AcceleratedDualDescent}, and yield distributed algorithms. Network flow problems have also been tackled with Newton-like methods~\cite{Bertsekas83-ProjectedNewtonMethods,Zargham12-AcceleratedDualDescent}. A related problem is Network Utility Maximization (NUM), which is used to model traffic control on the Internet~\cite{Kelly97-ChargingRateControlElasticTraffic,Low02-UnderstandingVegas}. For example, the TCP/IP protocol has been interpreted as a gradient algorithm solving a NUM. In~\cite{Mota12-DistributedMPC}, we compared a particular instance of the proposed algorithm with prior algorithms solving NUM, and showed that it requires less end-to-end communications. However, due to its structure, it does not offer interpretations of end-to-end protocols as realistic as gradient-based algorithms.

	Distributed Model Predictive Control (MPC)~\cite{Camponogara02-DistributedMPC} is another problem that has been addressed with algorithms solving~\eqref{Eq:PartialProb}, again in the special case of a variable whose components induce star subgraphs only. Such algorithms include subgradient methods~\cite{Wakasa08-DecentralizedMPCDualDecomp}, interior-point methods~\cite{Camponogara11-DistributedMPC}, fast gradient~\cite{Conte12-ComputationalAspectsDistributedMPC}, and ADMM-based methods~\cite{Conte12-ComputationalAspectsDistributedMPC,Summers12-DistributedMPConsensus} (which apply~\cite[\S7.2]{Boyd11-ADMM}). All these methods were designed for the special case of star-shaped induced subgraphs and, similarly to~\cite[\S7.2]{Boyd11-ADMM}, they become inefficient if applied to more generic cases. In spite of its generality, the algorithm we propose requires less communications than previous algorithms that were specifically designed for distributed MPC or network flow problems.

	Additionally, we apply our algorithm to two scenarios in distributed MPC that have not been considered before: problems where the variable is connected but the induced subgraphs are not stars, and problems with a non-connected variable. Both cases can model scenarios where subsystems that are coupled through their dynamics cannot communicate directly.

	Lastly, this paper extends considerably our preliminary work~\cite{Mota12-DistributedMPC}. In particular, the algorithm in~\cite{Mota12-DistributedMPC} was designed for bipartite networks and was based on the $2$-block ADMM. In contrast, the algorithms proposed here work on any connected network and are based on the Extended ADMM; thus, they have different convergence guarantees. Also, the MPC model proposed here is significantly more general than the one in~\cite{Mota12-DistributedMPC}.

\section{Terminology and Problem Statement}
\label{Sec:ProbStat}

	We start by introducing the concepts of \textit{communication network} and \textit{variable connectivity}.

\mypar{Communication network}
	 A communication network is represented as an undirected graph~$\mathcal{G} = (\mathcal{V},\mathcal{E})$, where $\mathcal{V} = \{1,\ldots,P\}$ is the set of nodes and~$\mathcal{E} \subseteq \mathcal{V} \times \mathcal{V}$ is the set of edges. Two nodes communicate directly if there is an edge connecting them in~$\mathcal{G}$. We assume:
	\begin{Assumption}\label{Ass:Network}
		$\mathcal{G}$ is connected and its topology does not change over time; also, a coloring scheme~$\mathcal{C}$ of~$\mathcal{G}$ is available beforehand.
	\end{Assumption}
	A coloring scheme~$\mathcal{C}$ is a set of numbers, called colors, assigned to the nodes such that two neighbors never have the same color, as shown in Fig.~\ref{Fig:ColoringScheme}. Given its importance in TDMA, a widespread protocol for avoiding packet collisions, there is a large literature on coloring networks, as briefly overviewed in~\cite{Mota12-DistributedBP}.	Our algorithm integrates naturally with TDMA, since both use coloring as a synchronization scheme: nodes work sequentially according to their colors, and nodes with the same color work in parallel. The difference is that TDMA uses a more restrictive coloring, as nodes within two hops cannot have the same color. Note that packet collision is often ignored in the design of distributed algorithms, as confirmed by the ubiquitous assumption that all nodes can communicate simultaneously.

	We associate with each node~$p$ in the communication network a function~$f_p: \mathbb{R}^{n_p} \xrightarrow{} \mathbb{R} \cup \{+\infty\}$, where~$n = n_1 + \cdots + n_P$, and make the
	\begin{Assumption}\label{Ass:Function}
		Each function~$f_p$ is known only at node~$p$ and it is closed, proper, and convex over~$\mathbb{R}^{n_p}$.
	\end{Assumption}
	Since we allow~$f_p$ to take infinite values, each node can impose constraints on the variable using indicator functions, i.e., functions that evaluate to $+\infty$ when the constraints are not satisfied, and to~$0$ otherwise.

	\begin{figure}[t]
		\centering
    \psscalebox{0.865}{
      \begin{pspicture}(4.5,4.8)
        \def\nodesimp{
					\pscircle*[linecolor=black!65!white](0,0){0.3}
        }
        \def\nodeB{
            \pscircle*[linecolor=black!15!white](0,0){0.3}
        }
        \def\nodeC{
            \pscircle[fillstyle=vlines*,linecolor=black!50!white,hatchcolor=black!50!white](0,0){0.3}
        }

        \rput(0.4,4.1){\rnode{C1}{\nodesimp}}   \rput(0.4,4.1){\small \textcolor{white}{$1$}}
        \rput(0.9,2.6){\rnode{C2}{\nodeC}}      \rput(0.9,2.6){\small \textcolor{black}{$2$}}
        \rput(0.8,0.9){\rnode{C3}{\nodesimp}}   \rput(0.8,0.9){\small \textcolor{white}{$3$}}
        \rput(3.0,1.1){\rnode{C4}{\nodeB}}      \rput(3.0,1.1){\small \textcolor{black}{$4$}}
        \rput(4.0,2.6){\rnode{C5}{\nodesimp}}   \rput(4.0,2.6){\small \textcolor{white}{$5$}}
        \rput(2.2,3.3){\rnode{C6}{\nodeB}}      \rput(2.2,3.3){\small \textcolor{black}{$6$}}

        \ncline[nodesep=0.33cm,linewidth=0.9pt]{-}{C1}{C2}
        \ncline[nodesep=0.33cm,linewidth=0.9pt]{-}{C1}{C6}
        \ncline[nodesep=0.33cm,linewidth=0.9pt]{-}{C2}{C3}
        \ncline[nodesep=0.33cm,linewidth=0.9pt]{-}{C2}{C6}
        \ncline[nodesep=0.33cm,linewidth=0.9pt]{-}{C3}{C4}
        \ncline[nodesep=0.33cm,linewidth=0.9pt]{-}{C4}{C5}
        \ncline[nodesep=0.33cm,linewidth=0.9pt]{-}{C5}{C6}


        \rput[lb](0.45,4.48){\small $1$}
        \rput[lt](1.18,2.40){\small $3$}
        \rput[lt](0.6,0.54){\small $1$}
        \rput[t](3.10,0.74){\small $2$}
        \rput[b](4.09,2.97){\small $1$}
        \rput[b](2.30,3.66){\small $2$}

      \end{pspicture}
			}
  \caption{
		Example of a coloring scheme of the communication network using~$3$ colors: $\mathcal{C}_1  = \{1,3,5\}$,	$\mathcal{C}_2 = \{4,6\}$, and~$\mathcal{C}_3 = \{2\}$.
  }
  \label{Fig:ColoringScheme}
  \end{figure}
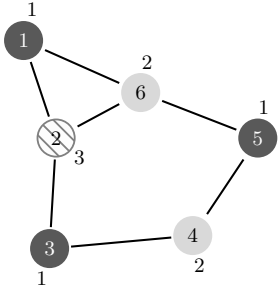

\mypar{Variable connectivity}
	Although each function~$f_p$ is available only at node~$p$, each component of the variable~$x$ may be associated with several nodes. Let~$x_l$ be a given component. The \textit{subgraph induced by}~$x_l$ is represented by~$\mathcal{G}_l = (\mathcal{V}_l, \mathcal{E}_l) \subseteq \mathcal{G}$, where~$\mathcal{V}_l$ is the set of nodes whose functions depend on~$x_l$, and an edge~$(i,j) \in \mathcal{E}$ belongs to~$\mathcal{E}_l$ if both~$i$ and~$j$ are in~$\mathcal{V}_l$. For example, the subgraph induced by~$x_1$ in Fig.~\ref{SubFig:PartialDomains} consists of $\mathcal{V}_1 = \{1,2,4,6\}$ and~$\mathcal{E}_1 = \{(1,2),(1,6),(2,6)\}$. We say that~$x_l$ is \textit{connected} if its induced subgraph is connected, and \textit{non-connected} otherwise. Likewise, a \textit{variable is connected} if all its components are connected, and \textit{non-connected} if it has at least one non-connected component.

\mypar{Problem statement}
	Given a network satisfying Assumption~\ref{Ass:Network} and a set of functions satisfying Assumptions~\ref{Ass:PartialVariable} and~\ref{Ass:Function}, we solve the following problem: \textit{design a distributed, communication-efficient algorithm that solves~\eqref{Eq:PartialProb}, either with a connected or with a non-connected variable.}

	By distributed algorithm we mean a procedure that makes no use of a central node and where each node communicates only with its neighbors. Unfortunately there is no known lower bound on how many communications are needed to solve~\eqref{Eq:PartialProb}. Because of this, communication-efficiency can only be assessed relative to existing algorithms that solve the same problem. As mentioned before, our strategy for this problem is to design an algorithm for the connected case and then generalize it to the non-connected case.

\section{Connected Case}
\label{Sec:ConnectedCase}

	In this section we derive an algorithm for~\eqref{Eq:PartialProb} assuming its variable is connected. Our derivation uses the same principles as the state-of-the-art algorithm~\cite{Mota12-DADMM,Mota12-DistributedBP} for the global problem~\eqref{Eq:SeparableOptim}. The main idea is to manipulate~\eqref{Eq:PartialProb} to make the Extended ADMM~\cite{Han12-NoteOnADMM} applicable. We will see that the algorithm derived here generalizes the one in~\cite{Mota12-DADMM,Mota12-DistributedBP}.

\mypar{Problem manipulation}
	Let~$x_l$ be a given component and~$\mathcal{G}_l = (\mathcal{V}_l,\mathcal{E}_l)$ the respective induced subgraph. In this section we assume each~$\mathcal{G}_l$ is connected. Since all nodes in~$\mathcal{V}_l$ are interested in~$x_l$, we will create a copy of~$x_l$ in each of those nodes: $x_l^{(p)}$ will be the copy at node~$p$ and~$x_{S_p}^{(p)} := \{x_l^{(p)}\}_{l \in S_p}$ will be the set of all copies at node~$p$. We rewrite~\eqref{Eq:PartialProb} as
	\begin{equation}\label{Eq:PartialManip1}
		\begin{array}{ll}
			\underset{\{\bar{x}_l\}_{l=1}^n}{\text{minimize}} & f_1(x_{S_1}^{(1)}) + f_2(x_{S_2}^{(2)}) + \cdots + f_P(x_{S_P}^{(P)}) \\
			\text{subject to} & x_l^{(i)} = x_l^{(j)}\,, \quad (i,j) \in \mathcal{E}_l\,,\,\,\, l = 1,\ldots,n\,,
		\end{array}
	\end{equation}
	where~$\{\bar{x}_l\}_{l=1}^L$ is the optimization variable and represents the set of all copies. We used~$\bar{x}_l$ to denote all copies of the component~$x_l$, which are located only in the nodes of~$\mathcal{G}_l$: $\bar{x}_l := \{x_l^{(p)}\}_{p \in \mathcal{V}_l}$. The reason for introducing constraints in~\eqref{Eq:PartialManip1} is to enforce equality among the copies of the same component: if two neighboring nodes~$i$ and~$j$ depend on~$x_l$, then~$x_l^{(i)} = x_l^{(j)}$ appears in the constraints of~\eqref{Eq:PartialManip1}. We assume that any edge in the communication network is represented as the ordered pair~$(i,j) \in \mathcal{E}$, with~$i<j$. As such, there are no repeated equations in~\eqref{Eq:PartialManip1}.
	Problems~\eqref{Eq:PartialProb} and~\eqref{Eq:PartialManip1} are equivalent because each induced subgraph is connected.

	A useful observation is that~$x_l^{(i)} = x_l^{(j)}$, $(i,j) \in \mathcal{E}_l$, can be written as~$A_l \bar{x}_l = 0$, where~$A_l$ is the transposed node-arc incidence matrix of the subgraph~$\mathcal{G}_l$. The node-arc incidence matrix represents a given graph with a matrix where each column corresponds to an edge $(i,j) \in \mathcal{E}$ and has~$1$ in the $i$th entry, $-1$ in the $j$th entry, and zeros elsewhere. We now partition the optimization variable according to the coloring scheme: for each~$l = 1,\ldots,n$, $\bar{x}_l = (\bar{x}_l^1,\ldots,\bar{x}_l^C)$, where
	$$
		\bar{x}_l^c =
		\left\{
			\begin{array}{ll}
				\{x_l^{(p)}\}_{p \in \mathcal{V}_l \cap \mathcal{C}_c}\,, &\quad \text{if $\mathcal{V}_l \cap \mathcal{C}_c \neq \emptyset$} \\
				\emptyset\,, &\quad \text{if $\mathcal{V}_l \cap \mathcal{C}_c = \emptyset$}
			\end{array}\,,
		\right.
	$$
	and~$\mathcal{C}_c$ is the set of nodes that have color~$c$. Thus, $\bar{x}_l^c$ is the set of copies of~$x_l$ held by the nodes that have color~$c$. If no node with color~$c$ depends on~$x_l$, then $\bar{x}_l^c$ is empty. A similar notation for the columns of the matrix~$A_l$ enables us to write $A_l \bar{x}_l$ as $\bar{A}_l^1 \bar{x}_l^1 + \cdots + \bar{A}_l^C \bar{x}_l^C$, and thus~\eqref{Eq:PartialManip1} equivalently as
  \begin{equation}\label{Eq:PartialManip2}
  	\begin{array}{ll}
			\underset{\bar{x}^1,\ldots,\bar{x}^C}{\text{minimize}} & \sum_{p \in \mathcal{C}_1} f_p(x_{S_p}^{(p)}) +  \cdots + \sum_{p \in \mathcal{C}_C} f_p(x_{S_p}^{(p)})\\
			\text{subject to} & \bar{A}^1 \bar{x}^1 + \cdots + \bar{A}^C \bar{x}^C = 0\,,
		\end{array}
  \end{equation}
  where~$\bar{x}^c = \{\bar{x}_l^c\}_{l=1}^n$, and~$\bar{A}^c$ is the diagonal concatenation of the matrices~$\bar{A}_1^c$, $\bar{A}_2^c$, \ldots, $\bar{A}_n^c$, i.e., $\bar{A}^c= \text{diag}(\bar{A}_1^c,\bar{A}_2^c,\ldots,\bar{A}_n^c)$. To better visualize the constraint in~\eqref{Eq:PartialManip2}, we wrote
  \begin{equation}\label{Eq:PartialStructureMatrices}
    \bar{A}^c \bar{x}^c
    =
		\begin{bmatrix}
			\bar{A}_1^c &              &        &             \\
			            & \bar{A}_2^c  &        &             \\
			            &              & \ddots &             \\
			            &              &        & \bar{A}_n^c
		\end{bmatrix}
		\begin{bmatrix}
			\bar{x}_1^c \\
			\bar{x}_2^c \\
			\vdots \\
			\bar{x}_n^c
		\end{bmatrix}
  \end{equation}
  for each~$c = 1,\ldots,C$.
  The format of~\eqref{Eq:PartialManip2} is exactly the one to which the Extended ADMM applies, as explained next.

\mypar{Extended ADMM}
	The Extended ADMM is a natural generalization of the \textit{Alternating Direction Method of Multipliers} (ADMM). Given a set of closed, convex functions~$g_1$, $\ldots$, $g_C$, and a set of full column rank matrices~$E_1$, \ldots, $E_C$, all with the same number of rows, the Extended ADMM solves
	\begin{equation}\label{Eq:ProblemSolvedByExtADMM}
		\begin{array}{ll}
			\underset{x_1,\ldots,x_C}{\text{minimize}} & g_1(x_1) + \cdots + g_C(x_C) \\
			\text{subject to} & E_1 x_1 + \cdots + E_C x_C = 0\,.
		\end{array}
	\end{equation}
	It consists of iterating on~$k$ the following equations:
	\begin{align}
        x_1^{k+1} &= \underset{x_1}{\arg\min} \,\,\, L_{\rho}(x_1, x_2^{k},\ldots,x_P^k;\lambda^k)
        \label{Eq:ExtADMMAlg1}
        \\
        x_2^{k+1} &= \underset{x_2}{\arg\min} \,\,\, L_{\rho}(x_1^{k+1}, x_2, x_3^k, \ldots, x_C^k; \lambda^k)
        \label{Eq:ExtADMMAlg2}
        \\
        &\,\,\,\vdots
        \notag
        \\
        x_C^{k+1} &= \underset{x_C}{\arg\min} \,\,\, L_{\rho}(x_1^{k+1}, x_2^{k+1}, \ldots, x_{C-1}^{k+1}, x_C; \lambda^k)
        \label{Eq:ExtADMMAlg3}
        \\
        \lambda^{k+1} &= \lambda^k + \rho \sum_{c = 1}^C E_c x_c^{k+1}\,,
        \label{Eq:ExtADMMAlg4}
  \end{align}
  where~$\lambda$ is the dual variable, $\rho$ is a positive parameter, and
  $$
    L_\rho(x;\lambda) = \sum_{c=1}^C \bigl(g_c(x_c) + \lambda^\top E_c x_c\bigr) + \frac{\rho}{2}\bigl\|\sum_{c=1}^C E_cx_c\bigr\|^2
  $$
  is the augmented Lagrangian of~\eqref{Eq:ProblemSolvedByExtADMM}. The original ADMM is recovered whenever~$C = 2$, i.e., when there are only two terms in the sums of~\eqref{Eq:ProblemSolvedByExtADMM}. The following theorem gathers some known convergence results for~\eqref{Eq:ExtADMMAlg1}-\eqref{Eq:ExtADMMAlg4}.

  \begin{Theorem}[\cite{Han12-NoteOnADMM,Mota11-ADMMProof}]\label{Teo:ConvergenceADMM}
    For each~$c=1,\ldots,C$, let~$g_c:\mathbb{R}^{n_c}\xrightarrow{} \mathbb{R} \cup \{+\infty\}$ be closed and convex over~$\mathbb{R}^{n_c}$ and $\text{dom}\,g_c \neq \emptyset$. Let each~$E_c$ be an $m\times n_c$ matrix. Assume~\eqref{Eq:ProblemSolvedByExtADMM} is solvable and that either $1)$ $C=2$ and each~$E_c$ has full column rank, or $2)$ $C\geq2$ and each~$g_c$ is strongly convex. Then, the sequence~$\{(x_1^k,\ldots,x_C^k,\lambda^k)\}$ generated by~\eqref{Eq:ExtADMMAlg1}-\eqref{Eq:ExtADMMAlg4} converges to a primal-dual solution of~\eqref{Eq:ProblemSolvedByExtADMM}.
  \end{Theorem}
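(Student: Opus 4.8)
The plan is to follow the standard Lyapunov (equivalently, variational-inequality) argument for augmented-Lagrangian splitting, handling the two hypotheses in parallel and diverging only where the block structure matters. First I would record the conditions characterizing a primal-dual solution $(x_1^\star,\ldots,x_C^\star,\lambda^\star)$ of~\eqref{Eq:ProblemSolvedByExtADMM}: primal feasibility $\sum_{c=1}^C E_c x_c^\star = 0$ together with $0 \in \partial g_c(x_c^\star) + E_c^\top \lambda^\star$ for every $c$; such a point exists because~\eqref{Eq:ProblemSolvedByExtADMM} is assumed solvable and the $g_c$ are closed and convex. Next, from each subproblem~\eqref{Eq:ExtADMMAlg1}--\eqref{Eq:ExtADMMAlg3} I would extract the first-order optimality condition $0 \in \partial g_c(x_c^{k+1}) + E_c^\top\bigl(\lambda^k + \rho\sum_{j<c} E_j x_j^{k+1} + \rho\sum_{j\ge c} E_j x_j^{k}\bigr)$, and then use the dual update~\eqref{Eq:ExtADMMAlg4} to rewrite the bracketed term as $\lambda^{k+1}$ plus a correction $\rho\sum_{j>c} E_j(x_j^{k}-x_j^{k+1})$ that encodes the Gauss--Seidel nature of the sweep.

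The heart of the argument is to exhibit a nonnegative quantity that decreases along the iterations. For the two-block case ($C=2$) I would take the classical energy $V^k = \tfrac{1}{\rho}\|\lambda^k-\lambda^\star\|^2 + \rho\|E_2(x_2^k-x_2^\star)\|^2$. Subtracting the subproblem optimality conditions from those satisfied at the solution and invoking monotonicity of the subdifferentials $\partial g_c$ yields, after collecting terms, a bound of the form $V^{k+1} \le V^k - \rho\|E_1(x_1^{k+1}-x_1^\star)+E_2(x_2^{k+1}-x_2^\star)\|^2 - \rho\|E_2(x_2^{k+1}-x_2^k)\|^2$. Telescoping shows both residual terms are summable, so the primal residual $\sum_c E_c x_c^k$ and the successive difference $E_2(x_2^{k+1}-x_2^k)$ vanish. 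The full-column-rank hypothesis on $E_1$ and $E_2$ then promotes convergence of $E_c x_c^k$ to convergence of the iterates $x_c^k$, and a standard closedness argument identifies the limit with a point satisfying the KKT conditions above.

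For the multi-block strongly convex case ($C\ge 2$) the same template applies, but the descent inequality now carries indefinite cross terms of the form $\rho\,(E_a x_a)^\top(E_b x_b)$ arising from the $\binom{C}{2}$ pairs of blocks; these are precisely what make naive multi-block ADMM diverge in general. Here strong convexity of each $g_c$ contributes a term $\mu_c\|x_c^{k+1}-x_c^\star\|^2$ to the monotonicity inequality, and I would augment the Lyapunov function with the quadratic distances $\|x_c^k-x_c^\star\|^2$ weighted by the moduli $\mu_c$ so that, after splitting the cross terms via Young's inequality, the positive strongly-convex contributions dominate. This again produces a summable-residual bound, hence $\sum_c E_c x_c^k \to 0$ and $x_c^{k+1}-x_c^k\to 0$, while strong convexity guarantees each subproblem has the unique minimizer needed to pass to the limit.

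The main obstacle, and the reason the statement splits into two regimes, is controlling those indefinite cross terms when $C\ge 3$: absent either the two-block telescoping structure or strong convexity, there is no mechanism to absorb them and convergence can genuinely fail. The delicate bookkeeping is choosing the Lyapunov weights and the Young's-inequality splits so that they remain compatible with the moduli $\mu_c$ across all pairs simultaneously. Since the statement is quoted from~\cite{Han12-NoteOnADMM,Mota11-ADMMProof}, I would ultimately defer the exact constant tracking to those references and present only the structure above.
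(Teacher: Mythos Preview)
The paper does not prove Theorem~\ref{Teo:ConvergenceADMM} at all: it is stated as a known result, with the citation to~\cite{Han12-NoteOnADMM,Mota11-ADMMProof} carrying the full burden of proof, and the paper moves on immediately to apply it. So there is no ``paper's own proof'' to compare against; the appropriate action in context is simply to cite the theorem, which you yourself acknowledge in your final sentence.

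That said, the sketch you give is the standard one and matches what those references do. For $C=2$ the Lyapunov function $V^k = \tfrac{1}{\rho}\|\lambda^k-\lambda^\star\|^2 + \rho\|E_2(x_2^k-x_2^\star)\|^2$ and the descent inequality you write are exactly the classical ADMM argument (this is the content of~\cite{Mota11-ADMMProof}). For $C\ge 2$ under strong convexity, the idea of using the moduli $\mu_c$ to absorb the Gauss--Seidel cross terms is precisely the mechanism in~\cite{Han12-NoteOnADMM}. One minor correction: in your optimality condition for the $c$th subproblem, the term multiplying $\rho$ should have $E_c x_c^{k+1}$ (the variable being optimized) rather than $E_c x_c^{k}$ in the ``$j\ge c$'' sum; as written, the $c$th block appears at the old iterate, which is inconsistent with the minimization in~\eqref{Eq:ExtADMMAlg1}--\eqref{Eq:ExtADMMAlg3}. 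With that fix, the rewriting in terms of $\lambda^{k+1}$ plus the correction $\rho\sum_{j>c}E_j(x_j^k-x_j^{k+1})$ goes through as you describe.
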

  It is believed that \eqref{Eq:ExtADMMAlg1}-\eqref{Eq:ExtADMMAlg4} converges even when~$C > 2$, each $g_c$ is closed and convex (not necessarily strongly convex), and each matrix~$E_c$ has full column rank. Such belief is supported by empirical evidence~\cite{Mota12-DistributedBP,Han12-NoteOnADMM} and its proof remains an open problem. So far, there are only proofs for modifications of~\eqref{Eq:ExtADMMAlg1}-\eqref{Eq:ExtADMMAlg4} that resulted either in a slower algorithm~\cite{Luo12-LinearConvergenceADMM}, or in algorithms not applicable to distributed scenarios~\cite{He12-ADMGaussianBackSubstitution}.

\mypar{Applying the Extended ADMM}
	The clear correspondence between~\eqref{Eq:PartialManip2} and~\eqref{Eq:ProblemSolvedByExtADMM} makes~\eqref{Eq:ExtADMMAlg1}-\eqref{Eq:ExtADMMAlg4} directly applicable to~\eqref{Eq:PartialManip2}.
	Associate a dual variable~$\lambda_l^{ij}$ to each constraint $x_l^{(i)} = x_l^{(j)}$ in~\eqref{Eq:PartialManip1}. 	Translating~\eqref{Eq:ExtADMMAlg4} component-wise, $\lambda_l^{ij}$ is updated as
	\begin{equation}\label{Eq:PartialDualVariableUpdate}
				\lambda_l^{ij,k+1} = \lambda_l^{ij,k} + \rho \bigl(x_l^{(i),k+1} - x_l^{(j),k+1}\bigr)\,,
  \end{equation}
  where~$x_l^{(p),k+1}$ is the estimate of~$x_l$ at node~$p$ after iteration~$k$. This estimate is obtained  from~\eqref{Eq:ExtADMMAlg1}-\eqref{Eq:ExtADMMAlg3}, where we will focus our attention now. This sequence will yield the synchronization mentioned in Section~\ref{Sec:ProbStat}: nodes work sequentially according to their colors, with the same colored nodes working in parallel.
  In fact, each problem in~\eqref{Eq:ExtADMMAlg1}-\eqref{Eq:AppSecondTerm_3} corresponds to a given color. Moreover, each of these problems decomposes into $|\mathcal{C}_c|$ problems that can be solved in parallel, each by a node with color~$c$. For example, the copies of the nodes with color~$1$ are updated according to~\eqref{Eq:ExtADMMAlg1}:
	\begin{align}
		  \bar{x}^{1,k+1}
		&=
		  \underset{\bar{x}^1}{\arg\min} \,
		  \sum_{p \in \mathcal{C}_1} f_p(x_{S_p}^{(p)}) + {\lambda^k}^\top \bar{A}^1 \bar{x}^1
		  \notag
		  \\
		  &\phantom{aaaaaaaaaaaaaa} + \frac{\rho}{2}\biggl\|\bar{A}^1 \bar{x}^{1} + \sum_{c = 2}^C \bar{A}^c \bar{x}^{c,k}\biggr\|^2
		\label{Eq:PartialC1_1}
		\\
		&=
		  \underset{\bar{x}^1}{\arg\min} \,
			\sum_{p \in \mathcal{C}_1} \biggl( f_p(x_{S_p}^{(p)})
			\notag
			\\&
			+ \sum_{l \in S_p} \sum_{j \in \mathcal{N}_p \cap \mathcal{V}_l} \Bigl(\text{sign}(j-p) \lambda_l^{pj,k} - \rho\, x_l^{(j),k}\Bigr)^\top x_l^{(p)}
			\notag
			\\&\phantom{aaaaaaaaaaaaaaa}
			+ \frac{\rho}{2}\sum_{l \in S_p} D_{p,l}\Bigl(x_l^{(p)}\Bigr)^2 \biggr)\,,
		\label{Eq:PartialC1_2}
	\end{align}
	whose equivalence is established in Lemma~\ref{Lem:EquivalenceNodesC1} below. In~\eqref{Eq:PartialC1_2},
	the sign function is defined as~$1$ for nonnegative arguments and as~$-1$ for negative arguments. Also, $D_{p,l}$ is the degree of node~$p$ in the subgraph~$\mathcal{G}_l$, i.e., the number of neighbors of node~$p$ that also depend on~$x_l$. Of course, $D_{p,l}$ is only defined when~$l \in S_p$. Before establishing the equivalence between~\eqref{Eq:PartialC1_1} and~\eqref{Eq:PartialC1_2}, note that~\eqref{Eq:PartialC1_2} decomposes into~$|\mathcal{C}_1|$ problems that can be solved in parallel. This is because~$\bar{x}^1$ consists of the copies held by the nodes with color~$1$; and, since nodes with the same color are never neighbors, none of the copies in~$\bar{x}^1$ appears as~$x_l^{(j),k}$ in the second term of~\eqref{Eq:PartialC1_2}.  Therefore, all nodes~$p$ in~$\mathcal{C}_1$ can solve in parallel the following problem:
	\begin{multline}\label{Eq:PartialProbSolvedAtEachNodeC1}
		x_{S_p}^{(p),k+1}
		=
		\underset{x_{S_p}^{(p)}=\{x_l^{(p)}\}_{l \in S_p}}{\arg\min} \,
		f_p(x_{S_p}^{(p)}) \\+ \sum_{l \in S_p} \sum_{j \in \mathcal{N}_p \cap \mathcal{V}_l} \Bigl(\text{sign}(j-p) \lambda_l^{pj,k} - \rho\, x_l^{(j),k}\Bigr)^\top x_l^{(p)} \\+ \frac{\rho}{2}\sum_{l \in S_p} D_{p,l}\Bigl(x_l^{(p)}\Bigr)^2\,.
	\end{multline}
	However, node~$p$ can solve~\eqref{Eq:PartialProbSolvedAtEachNodeC1} only if it knows~$x_l^{(j),k}$ and~$\lambda_l^{pj,k}$, for $j \in \mathcal{N}_p \cap \mathcal{V}_l$ and $l \in S_p$. This is possible if, in the previous iteration, it received the respective copies of~$x_l$ from its neighbors. This is also enough for knowing~$\lambda_l^{pj,k}$, although we will see later that no node needs	to know each~$\lambda_l^{pj,k}$ individually.	The proof of the following lemma, in Appendix~\ref{App:EquivalenceNodesC1}, shows how we obtained~\eqref{Eq:PartialC1_2} from~\eqref{Eq:PartialC1_1}.
	\begin{Lemma}\label{Lem:EquivalenceNodesC1}
		\eqref{Eq:PartialC1_1} and \eqref{Eq:PartialC1_2} are equivalent.
	\end{Lemma}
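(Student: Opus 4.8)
The plan is to show that the two objectives differ only by a term that is constant in the optimization variable $\bar{x}^1$, so that they share the same minimizer. I would exploit the block-diagonal structure $\bar{A}^c = \text{diag}(\bar{A}_1^c,\ldots,\bar{A}_n^c)$ displayed in~\eqref{Eq:PartialStructureMatrices}, which makes both the linear term ${\lambda^k}^\top \bar{A}^1\bar{x}^1$ and the quadratic term of~\eqref{Eq:PartialC1_1} separate across the components $l=1,\ldots,n$. For each $l$, the block $\bar{A}_l^1$ collects the columns of the transposed node-arc incidence matrix $A_l$ of $\mathcal{G}_l$ that correspond to the color-$1$ nodes in $\mathcal{V}_l$, each such column carrying a single $\pm 1$ per edge incident to that node. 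The whole computation therefore reduces to expanding inner products and squared norms of $A_l\bar{x}_l$ edge by edge and regrouping the result by node.

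First I would treat the linear term. Writing ${\lambda^k}^\top\bar{A}^1\bar{x}^1=\sum_l (\lambda_l^k)^\top \bar{A}_l^1\bar{x}_l^1$, with $\lambda_l^k$ the subvector of multipliers on $\mathcal{E}_l$, and using that row $(i,j)$ of $A_l$ (with $i<j$) contributes $x_l^{(i)}-x_l^{(j)}$, the coefficient multiplying a color-$1$ copy $x_l^{(p)}$ is $+\lambda_l^{pj,k}$ for each edge $(p,j)$ with $p<j$ and $-\lambda_l^{jp,k}$ for each edge $(j,p)$ with $j<p$. With the convention that $\lambda_l^{pj,k}$ names the multiplier on the edge $\{p,j\}$ regardless of orientation, this is exactly $\text{sign}(j-p)\,\lambda_l^{pj,k}$ summed over $j\in\mathcal{N}_p\cap\mathcal{V}_l$, recovering the first part of the middle term of~\eqref{Eq:PartialC1_2}. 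Next I would expand the quadratic term as $\frac{\rho}{2}\|\bar{A}^1\bar{x}^1\|^2+\rho\,(\bar{A}^1\bar{x}^1)^\top b+\frac{\rho}{2}\|b\|^2$, where $b:=\sum_{c=2}^C\bar{A}^c\bar{x}^{c,k}$ is independent of $\bar{x}^1$; the last summand is constant and may be dropped from the $\arg\min$.

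The crux is the observation, already noted before the statement, that nodes of color $1$ are pairwise non-adjacent, so every edge of $\mathcal{G}_l$ has at most one color-$1$ endpoint. This has two consequences. In $\|\bar{A}_l^1\bar{x}_l^1\|^2$ no product of two distinct color-$1$ copies can occur, so the squared norm collapses to $\sum_{p\in\mathcal{C}_1\cap\mathcal{V}_l}D_{p,l}\,(x_l^{(p)})^2$, since each of the $D_{p,l}$ edges of $\mathcal{G}_l$ incident to $p$ contributes $(\pm x_l^{(p)})^2$; summing over $l$ yields the last term of~\eqref{Eq:PartialC1_2}. In the cross term, which decomposes as $\rho\sum_l(\bar{A}_l^1\bar{x}_l^1)^\top\bigl(\sum_{c=2}^C\bar{A}_l^c\bar{x}_l^{c,k}\bigr)$, every edge $\{p,j\}$ with $p$ of color $1$ pairs the factor $\pm x_l^{(p)}$ against the color-$(\geq 2)$ contribution of its other endpoint $j$, namely $\mp x_l^{(j),k}$; a short sign check shows the two signs always combine to give $-x_l^{(p)}x_l^{(j),k}$, whether $p<j$ or $p>j$, so the cross term equals $-\rho\sum_{p\in\mathcal{C}_1}\sum_{l\in S_p}\sum_{j\in\mathcal{N}_p\cap\mathcal{V}_l}x_l^{(j),k}\,x_l^{(p)}$. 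Adding this to the linear term reproduces the full middle term of~\eqref{Eq:PartialC1_2}, completing the identification up to the dropped constant. I expect the only delicate point to be this sign bookkeeping in the incidence matrix: one must keep the orientation convention $(i,j)$ with $i<j$ consistent across the $+1/-1$ entries, the $\text{sign}(j-p)$ factors, and the reuse of $\lambda_l^{pj,k}$ for both orientations, and verify that the coloring assumption indeed removes all color-$1$ $\times$ color-$1$ products while leaving each neighbor of $p$ accounted for exactly once.
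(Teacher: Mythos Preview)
Your proposal is correct and follows essentially the same route as the paper's proof: decompose the linear and quadratic terms of~\eqref{Eq:PartialC1_1} through the block-diagonal structure~\eqref{Eq:PartialStructureMatrices}, drop the $\bar{x}^1$-independent piece, and use the fact that same-colored nodes are non-adjacent to reduce $(\bar{A}_l^1)^\top\bar{A}_l^1$ to a diagonal of degrees and $(\bar{A}_l^1)^\top\bar{A}_l^c$ to off-diagonal Laplacian entries. The only cosmetic difference is that you argue edge by edge where the paper phrases the same computation via submatrices of the Laplacian $A_l^\top A_l$, and you adopt a symmetric naming convention $\lambda_l^{pj}=\lambda_l^{jp}$ for the multiplier on $\{p,j\}$, whereas the paper extends $\lambda_l^{ij}$ antisymmetrically; either convention reproduces the $\text{sign}(j-p)$ factor in~\eqref{Eq:PartialC1_2}.
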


	We just saw how~\eqref{Eq:ExtADMMAlg1} yields $|\mathcal{C}_1|$ problems with the format of~\eqref{Eq:PartialProbSolvedAtEachNodeC1} that can be solved in parallel by all the nodes with color~$1$. For the other colors, the analysis is the same with one minor difference: in the second term of~\eqref{Eq:PartialProbSolvedAtEachNodeC1} we have~$x_l^{(j),k+1}$ from the neighbors with a smaller color and~$x_l^{(j),k}$ from the nodes with a larger color.

	\begin{algorithm}
    \caption{Algorithm for a connected variable}
    \algrenewcommand\algorithmicrequire{\textbf{Initialization:}}
    \label{Alg:Conn}
    \begin{algorithmic}[1]
    \small
    \Require for all~$p \in \mathcal{V}$, $l \in S_p$, set $\gamma_{l}^{(p),1} = x_l^{(p),1} = 0$; $k=1$
    \Repeat
    \For{$c =1,\ldots,C$}  \label{SubAlg:Conn_ForColors}
        \ForAll{$p \in \mathcal{C}_c$ [in parallel]}
        \ForAll{$l \in S_p$}
            $$
                v_l^{(p),k} = \gamma_l^{(p),k}-
                \rho \sum_{\begin{subarray}{c}
                             j \in \mathcal{N}_p \cap \mathcal{V}_l \\
                             C(j) < c
                           \end{subarray}
                }x_l^{(j),k+1} - \rho \sum_{\begin{subarray}{c}
                             j \in \mathcal{N}_p \cap \mathcal{V}_l \\
                             C(j) > c
                           \end{subarray}
                }x_l^{(j),k}
            $$
            \label{SubAlg:Conn_Vec}
         \EndFor
        \Statex
        \State Set~$x_{S_p}^{(p),k+1}$ as the solution of
            $$
            \underset{x_{S_p}^{(p)}=\{x_l^{(p)}\}_{l \in S_p}}{\arg\min}
            \,
						f_p(x_{S_p}^{(p)}) + \sum_{l \in S_p} {v_l^{(p),k}}^\top x_l^{(p)}  + \frac{\rho}{2}\sum_{l \in S_p} D_{p,l}\Bigl(x_l^{(p)}\Bigr)^2
            $$
           \label{SubAlg:Conn_Prob}
        \State For each component~$l \in S_p$, send~$x_l^{(p),k+1}$ to $\mathcal{N}_p \cap \mathcal{V}_l$
        \label{SubAlg:Conn_Comm}
    \EndFor
    \EndFor \label{SubAlg:Conn_EndForColors}

    \ForAll{$p \in \mathcal{V}$ and $l \in S_p$ [in parallel]} \vspace{0.15cm}
    \hfill

        $
            \gamma_l^{(p),k+1} = \gamma_l^{(p),k} + \rho \sum_{j \in \mathcal{N}_p \cap \mathcal{V}_l} (x_l^{(p),k+1} -  x_l^{(j),k+1})
        $\label{SubAlg:Conn_DualVar}  \vspace{0.15cm}

    \EndFor
    \State $k \gets k+1$
    \Until{some stopping criterion is met}
    \end{algorithmic}
  \end{algorithm}

  The resulting algorithm is shown in Algorithm~\ref{Alg:Conn}. There is a clear correspondence between the structure of Algorithm~\ref{Alg:Conn} and equations~\eqref{Eq:ExtADMMAlg1}-\eqref{Eq:ExtADMMAlg4}: steps~\ref{SubAlg:Conn_ForColors}-\ref{SubAlg:Conn_EndForColors} correspond to~\eqref{Eq:ExtADMMAlg1}-\eqref{Eq:ExtADMMAlg3}, and the loop in step~\ref{SubAlg:Conn_DualVar} corresponds to~\eqref{Eq:ExtADMMAlg4}. In steps~\ref{SubAlg:Conn_ForColors}-\ref{SubAlg:Conn_EndForColors}, nodes work according to their colors, with the same colored nodes working in parallel. Each node computes the vector~$v$ in step~\ref{SubAlg:Conn_Vec}, solves the optimization problem in step~\ref{SubAlg:Conn_Prob}, and then sends the new estimates of~$x_l$ to the neighbors that also depend on~$x_l$, for~$l \in S_p$. Note the introduction of extra notation in step~\ref{SubAlg:Conn_Vec}: $C(p)$ is the color of node~$p$. The computation of~$v_l^{(p),k}$ in that step requires~$x_l^{(j),k}$ from the neighbors with larger colors and~$x_l^{(j),k+1}$ from the neighbors with smaller colors. While the former is obtained from the previous iteration, the latter is obtained at the current iteration, after the respective nodes execute step~\ref{SubAlg:Conn_Comm}. Regarding the problem in step~\ref{SubAlg:Conn_Prob}, it involves the private function of node~$p$, $f_p$, to which is added a linear and a quadratic term. This fulfills our requirement that all operations involving~$f_p$ be performed at node~$p$.

  Note that the update of the dual variables in step~\ref{SubAlg:Conn_DualVar} is different from~\eqref{Eq:PartialDualVariableUpdate}. In particular, all the $\lambda$'s at node~$p$ were condensed into a single dual variable~$\gamma^{(p)}$. This was done because the optimization problem~\eqref{Eq:PartialProbSolvedAtEachNodeC1} does not depend on the individual $\lambda_l^{pj}$'s, but only on~$\gamma_l^{(p),k} := \sum_{j \in \mathcal{N}_p \cap \mathcal{V}_l} \text{sign}(j-p) \lambda_l^{pj,k}$. If we replace
  \begin{equation}\label{Eq:PartialNewDualUpdate}
  	\lambda_l^{ij,k+1} = \lambda_l^{ij,k} + \rho \,\,\text{sign}(j-i)\bigl(x_l^{(i),k+1} - x_l^{(j),k+1}\bigr)
  \end{equation}
  in the definition of~$\gamma_l^{(p),k}$, we obtain the update of step~\ref{SubAlg:Conn_DualVar}. The extra ``sign'' in~\eqref{Eq:PartialNewDualUpdate} (w.r.t. \eqref{Eq:PartialDualVariableUpdate}) was necessary to take into account the extension of the definition of the dual variable~$\lambda_l^{ij}$ for~$i>j$ (see Appendix~\ref{App:EquivalenceNodesC1}).

\mypar{Convergence}
	Apart from manipulations, Algorithm~\ref{Alg:Conn} results from the application of the Extended ADMM to problem~\eqref{Eq:PartialManip2}. Consequently, the conclusions of Theorem~\ref{Teo:ConvergenceADMM} apply if we prove that~\eqref{Eq:PartialManip2} satisfies the conditions of that theorem.
	\begin{Lemma}\label{Lem:FullColumnRank}
		Each matrix~$\bar{A}^c$ in~\eqref{Eq:PartialManip2} has full column rank.
	\end{Lemma}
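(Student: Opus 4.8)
The plan is to show that each $\bar{A}^c$ has full column rank by reducing to the block structure in~\eqref{Eq:PartialStructureMatrices} and then analyzing a single block $\bar{A}_l^c$. Since $\bar{A}^c = \text{diag}(\bar{A}_1^c,\ldots,\bar{A}_n^c)$ is block-diagonal, its columns are linearly independent if and only if each diagonal block $\bar{A}_l^c$ has full column rank. So it suffices to fix a component index $l$ and a color $c$ and prove that $\bar{A}_l^c$ has full column rank.

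Next I would recall the meaning of $\bar{A}_l^c$. The full matrix $A_l$ is the transposed node-arc incidence matrix of the connected subgraph $\mathcal{G}_l = (\mathcal{V}_l,\mathcal{E}_l)$: its rows index edges of $\mathcal{E}_l$ and its columns index nodes of $\mathcal{V}_l$, each row having a $+1$ and a $-1$. The submatrix $\bar{A}_l^c$ is obtained by selecting the columns corresponding to nodes in $\mathcal{V}_l \cap \mathcal{C}_c$, i.e. the nodes of color $c$ that depend on $x_l$. Thus the claim amounts to: \emph{the columns of the (transposed) incidence matrix of $\mathcal{G}_l$ indexed by an independent set of vertices are linearly independent.} The key structural fact I would invoke is that $\mathcal{C}_c$ is a color class, so $\mathcal{V}_l \cap \mathcal{C}_c$ is an \emph{independent set} in $\mathcal{G}_l$: no two of these nodes are adjacent.

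The core argument is then a short combinatorial computation. Suppose $\bar{A}_l^c\, z = 0$ for some vector $z = (z_p)_{p \in \mathcal{V}_l \cap \mathcal{C}_c}$; I must show $z = 0$. Reading this row by row over $\mathcal{E}_l$: for each edge $(i,j)$, the corresponding equation is $\text{(contribution of }i) - \text{(contribution of }j) = 0$, but since $i$ and $j$ are neighbors they cannot both lie in the independent set $\mathcal{V}_l \cap \mathcal{C}_c$. Hence at most one endpoint contributes a variable; any edge incident to a selected node $p$ therefore forces $z_p = 0$ directly. Because $\mathcal{G}_l$ is connected and (we may assume) has at least two nodes, every node in $\mathcal{V}_l \cap \mathcal{C}_c$ has at least one incident edge in $\mathcal{E}_l$, so each $z_p$ is individually pinned to $0$, giving $z = 0$ and full column rank. (The degenerate case $|\mathcal{V}_l| = 1$ has no edges and no constraints, so $\bar{A}_l^c$ is an empty matrix and the claim is vacuous; empty blocks can simply be dropped.)

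The step I expect to require the most care is the clean reduction from the equation $\bar{A}_l^c z = 0$ to the per-node conclusion $z_p = 0$, because it hinges entirely on the independent-set property of the color class and on every relevant node having positive degree in the \emph{induced} subgraph $\mathcal{G}_l$ (not just in $\mathcal{G}$). One must be careful that the relevant degree is $D_{p,l}$, the degree of $p$ within $\mathcal{G}_l$, and that connectivity of $\mathcal{G}_l$ guarantees $D_{p,l} \geq 1$ whenever $|\mathcal{V}_l| \geq 2$. Everything else is a direct consequence of the block-diagonal structure and the definition of the incidence matrix.
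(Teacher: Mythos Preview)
Your proof is correct and follows essentially the same approach as the paper: reduce to the individual blocks $\bar{A}_l^c$ via the block-diagonal structure, then use that a color class is an independent set together with connectivity of $\mathcal{G}_l$ (so every relevant node has positive degree $D_{p,l}$), handling the degenerate case $|\mathcal{V}_l|=1$ separately. The only cosmetic difference is in the final step: the paper shows that the Gram matrix $(\bar{A}_l^c)^\top \bar{A}_l^c$, being a principal submatrix of the Laplacian of $\mathcal{G}_l$ indexed by mutually non-adjacent nodes, is diagonal with strictly positive diagonal, whereas you argue directly on the kernel edge-by-edge.
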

	\begin{proof}
		Let~$c$ be any color in~$\{1,2,\ldots,C\}$. By definition, $\bar{A}^c = \text{diag}(\bar{A}_1^c,\bar{A}_2^c,\ldots,\bar{A}_n^c)$; therefore, we have to prove that each~$\bar{A}_l^c$ has full column rank, for~$l=1,2,\ldots,n$. Let then~$c$ and~$l$ be fixed. We are going to prove that $(\bar{A}_l^c)^\top \bar{A}_l^c$, a square matrix, has full rank, and therefore $\bar{A}_l^c$ has full column rank. Since~$\bar{A}_l = \begin{bmatrix}\bar{A}_1^c & \bar{A}_2^c & \cdots & \bar{A}_n^c\end{bmatrix}$, $(\bar{A}_l^c)^\top \bar{A}_l^c$ corresponds to the $l$th block in the diagonal of the matrix~$A_l^\top A_l$, the Laplacian matrix of the induced subgraph~$\mathcal{G}_l$. 	By assumption, in this section all induced subgraphs are connected. This means each node in~$\mathcal{G}_l$ has at least one neighbor also in~$\mathcal{G}_l$ and hence each entry in the diagonal of~$A_l^\top A_l$ is greater than zero.\footnote{We are implicitly excluding the pathological case where a component~$x_l$ appears in only one node, say node~$p$; this would lead to a Laplacian matrix~$A_l^\top A_l$ equal to~$0$. This case is easily addressed by redefining~$f_p$, the function at node~$p$, to~$\tilde{f}_p(\cdot) = \inf_{x_l} f_p(\ldots,x_l,\ldots)$.} The same happens to the entries in the diagonal of~$(\bar{A}_l^c)^\top \bar{A}_l^c$. In fact, these are the only nonzero entries of~$(\bar{A}_l^c)^\top \bar{A}_l^c$, as this is a diagonal matrix. This is because~$(\bar{A}_l^c)^\top \bar{A}_l^c$ corresponds to the Laplacian entries of nodes that have the same color, which are never neighbors. Therefore, $(\bar{A}_l^c)^\top \bar{A}_l^c$ has full rank.
	\end{proof}

	The following corollary, whose proof is omitted, is a straightforward consequence of Theorem~\ref{Teo:ConvergenceADMM} and Lemma~\ref{Lem:FullColumnRank}.
	\begin{Corollary}\label{Cor:ConvergenceConn}
		Let Assumptions~\ref{Ass:PartialVariable}-\ref{Ass:Function} hold and let the variable be connected. Let also one of the following conditions hold:
		\begin{enumerate}
			\item the network is bipartite, i.e., $C=2$, or
			\item each~$\sum_{p \in \mathcal{C}_c} f_p(x_{S_p})$ is strongly convex, $c=1,\ldots,C$.
		\end{enumerate}
		Then, the sequence $\{x_{S_p}^{(p),k}\}_{k=1}^\infty$ at node~$p$, produced by Algorithm~\ref{Alg:Conn}, converges to~$x_{S_p}^\star$, where~$x^\star$ solves~\eqref{Eq:PartialProb}.
	\end{Corollary}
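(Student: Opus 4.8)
The plan is to obtain the corollary as a direct application of Theorem~\ref{Teo:ConvergenceADMM} to the manipulated problem~\eqref{Eq:PartialManip2}, followed by a translation of the resulting convergence statement back to the original problem~\eqref{Eq:PartialProb}. First I would make explicit the correspondence between~\eqref{Eq:PartialManip2} and~\eqref{Eq:ProblemSolvedByExtADMM}: set $g_c(\bar{x}^c) = \sum_{p \in \mathcal{C}_c} f_p(x_{S_p}^{(p)})$ and $E_c = \bar{A}^c$ for $c = 1,\ldots,C$. As already argued in the derivation, Algorithm~\ref{Alg:Conn} is exactly the Extended ADMM~\eqref{Eq:ExtADMMAlg1}--\eqref{Eq:ExtADMMAlg4} applied with this identification (the manipulations condensing the dual variables into the $\gamma^{(p)}$'s do not alter the iterates). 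Hence it suffices to check that~\eqref{Eq:PartialManip2} meets the hypotheses of Theorem~\ref{Teo:ConvergenceADMM} under either condition of the corollary.

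Second I would verify those hypotheses. Each $g_c$ is a finite sum of functions that are closed, proper, and convex by Assumption~\ref{Ass:Function}; since the copies held by distinct color-$c$ nodes form disjoint blocks of variables, $g_c$ is itself closed and convex, and its domain is nonempty because~\eqref{Eq:PartialProb} is assumed solvable. Problem~\eqref{Eq:PartialManip2} is solvable for the same reason: it is equivalent to~\eqref{Eq:PartialManip1}, which in turn is equivalent to~\eqref{Eq:PartialProb} because every induced subgraph $\mathcal{G}_l$ is connected. It then remains to discharge one of the two alternatives of the theorem. In Case~1 the network is bipartite, so $C = 2$, and Lemma~\ref{Lem:FullColumnRank} guarantees that each $E_c = \bar{A}^c$ has full column rank; this is exactly condition~$1)$ of Theorem~\ref{Teo:ConvergenceADMM}. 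In Case~2, strong convexity of the color-$c$ objective makes each $g_c$ strongly convex in its block $\bar{x}^c$, which is condition~$2)$.

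With the hypotheses in place, Theorem~\ref{Teo:ConvergenceADMM} yields that the sequence $\{(\bar{x}^{1,k},\ldots,\bar{x}^{C,k},\lambda^k)\}$ produced by the algorithm converges to a primal--dual solution of~\eqref{Eq:PartialManip2}. Restricting the primal limit to the coordinates held by node~$p$ shows that $x_{S_p}^{(p),k}$ converges to some $x_{S_p}^{(p),\star}$. The final step is to identify this limit with $x_{S_p}^\star$: any feasible point of~\eqref{Eq:PartialManip2} satisfies $x_l^{(i)} = x_l^{(j)}$ for every edge $(i,j) \in \mathcal{E}_l$, and since $\mathcal{G}_l$ is connected all copies of $x_l$ are forced to a common value; collecting these common values across $l$ recovers a minimizer of~\eqref{Eq:PartialProb}. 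Therefore $x_l^{(p),\star} = x_l^\star$ for every $l \in S_p$, giving $x_{S_p}^{(p),k} \to x_{S_p}^\star$ as claimed.

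I expect the main obstacle to be this last translation rather than the verification of the theorem's hypotheses. One must be careful that the strong-convexity alternative of the corollary is read as strong convexity of $g_c$ in the copy variables $\bar{x}^c$ (and not merely of the shared-variable objective), and, more importantly, that the equivalence between the lifted problem and~\eqref{Eq:PartialProb} is invoked in both directions: once to transfer solvability to~\eqref{Eq:PartialManip2} so that the theorem applies, and once to pull the primal limit back to a genuine solution of~\eqref{Eq:PartialProb}. Both directions rest squarely on the connectedness of every induced subgraph, the standing assumption of this section.
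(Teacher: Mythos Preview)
Your proposal is correct and follows exactly the route the paper intends: the paper omits the proof, stating only that the corollary is a straightforward consequence of Theorem~\ref{Teo:ConvergenceADMM} and Lemma~\ref{Lem:FullColumnRank}, and you have simply spelled out that deduction together with the (connectedness-based) translation back to~\eqref{Eq:PartialProb}. Your flagged caveat about reading the strong-convexity hypothesis at the level of the copy variables $\bar{x}^c$ is apt and is the only place where the paper's statement requires a charitable interpretation.
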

	As stated before, it is believed that the Extended ADMM converges for~$C>2$ even when none of the~$g_c$'s is strongly convex (just closed and convex). However, it is required that each~$E_c$ has full column rank. This translates into the belief that Algorithm~\ref{Alg:Conn} converges for any network, provided each~$f_p$ is closed and convex and each matrix~$\bar{A}^c$ in~\eqref{Eq:PartialManip2} has full column rank. The last condition is the content of Lemma~\ref{Lem:FullColumnRank}.

\mypar{Comparison with other algorithms}
	Algorithm~\ref{Alg:Conn} is a generalization of D-ADMM~\cite{Mota12-DistributedBP}: by violating Assumption~\ref{Ass:PartialVariable} and making $S_p = \{1,\ldots,n\}$ for all~$p$, the variable becomes global and Algorithm~\ref{Alg:Conn} becomes exactly D-ADMM. This is a generalization indeed, for Algorithm~\ref{Alg:Conn} cannot be obtained from D-ADMM. The above fact is not surprising since Algorithm~\ref{Alg:Conn} was derived using the same set of ideas as D-ADMM, but adapted to a partial variable. Each iteration of Algorithm~\ref{Alg:Conn} (resp. D-ADMM) involves communicating $\sum_{p=1}^P |S_p|$ (resp. $nP$) numbers. Under Assumption~\ref{Ass:PartialVariable}, $\sum_{p=1}^P |S_p| < nP$, and thus there is a clear per-iteration gain in solving~\eqref{Eq:PartialProb} with Algorithm~\ref{Alg:Conn}. Although Assumption~\ref{Ass:PartialVariable} can be ignored in the sense that Algorithm~\ref{Alg:Conn} still works without it, we considered that assumption to make clear the type of problems addressed in this paper.

	\begin{algorithm}
    \caption{\cite{Kekatos12-DistributedRobustPowerStateEstimation}}
    \algrenewcommand\algorithmicrequire{\textbf{Initialization:}}
    \label{Alg:Kekatos}
    \begin{algorithmic}[1]
    \small
    \Require for all~$p \in \mathcal{V}$, $l \in S_p$, set $\gamma_{l}^{(p),1} = x_l^{(p),1} = 0$; $k=1$
    \Repeat
      \ForAll{$p \in \mathcal{V}$ [in parallel]}
        \ForAll{$l \in S_p$}
            $$
                v_l^{(p),k} = \gamma_l^{(p),k} - \frac{\rho}{2} \Bigl(D_{p,l} x_l^{(p),k} + \sum_{j \in \mathcal{N}_p \cap \mathcal{V}_l} x_l^{(j),k}\Bigr)
            $$
            \label{SubAlg:2BlockPartialVec}
         \EndFor
        \Statex
        \State Set $x_{S_p}^{(p),k+1}$ as the solution of
            $$
            \underset{x_{S_p}^{(p)}=\{x_l^{(p)}\}_{l \in S_p}}{\arg\min}
            \,
						f_p(x_{S_p}^{(p)}) + \sum_{l \in S_p} {v_l^{(p),k}}^\top x_l^{(p)}  + \frac{\rho}{2}\sum_{l \in S_p} D_{p,l}\Bigl(x_l^{(p)}\Bigr)^2
            $$
           \label{SubAlg:2BlockPartialProb}
        \State For each component~$l \in S_p$, send~$x_l^{(p),k+1}$ to $\mathcal{N}_p \cap \mathcal{V}_l$
        \label{SubAlg:2BlockPartialComm}
    \EndFor

    \ForAll{$p \in \mathcal{V}$ and $l \in S_p$ [in parallel]} \vspace{0.15cm}
    \hfill

        $
            \gamma_l^{(p),k+1} = \gamma_l^{(p),k} + \frac{\rho}{2} \sum_{j \in \mathcal{N}_p \cap \mathcal{V}_l} (x_l^{(p),k+1} -  x_l^{(j),k+1})
        $\label{SubAlg:2BlockPartialDualVarUpdt}  \vspace{0.15cm}

    \EndFor
    \State $k \gets k+1$
    \Until{some stopping criterion is met}
    \end{algorithmic}
  \end{algorithm}

  We mentioned before that the algorithm in~\cite{Kekatos12-DistributedRobustPowerStateEstimation} is the only one we found in the literature that efficiently solves~\eqref{Eq:PartialProb} in the same scenarios as Algorithm~\ref{Alg:Conn}. For comparison purposes, we show it as Algorithm~\ref{Alg:Kekatos}. Algorithms~\ref{Alg:Conn} and~\ref{Alg:Kekatos} are very similar in format, although their derivations are considerably different. In particular, Algorithm~\ref{Alg:Kekatos} is derived from the $2$-block ADMM and thus it has stronger convergence guarantees. Namely, it does not require the network to be bipartite nor any function to be strongly convex (cf. Corollary~\ref{Cor:ConvergenceConn}). Also, it does not require any coloring scheme and, instead, all nodes perform the same tasks in parallel. Note also that the updates of~$v_l^{(p)}$ and~$\gamma_l^{(p)}$ are different in both algorithms. In the same way that Algorithm~\ref{Alg:Conn} was derived using the techniques of D-ADMM, Algorithm~\ref{Alg:Kekatos} was derived using the techniques of~\cite{Zhu09-DistributedInNetworkChannelCoding}. And, as in the experimental results of~\cite{Mota12-DistributedBP,Mota12-DADMM}, we will observe in Section~\ref{Sec:ExperimentalResults} that Algorithm~\ref{Alg:Conn} always requires less communications than Algorithm~\ref{Alg:Kekatos}. Next, we propose a modification to Algorithms~\ref{Alg:Conn} and~\ref{Alg:Kekatos} that makes them applicable to a non-connected variable.

\section{Non-connected Case}
\label{Sec:NonConnected}

	So far, we have assumed a connected variable in~\eqref{Eq:PartialProb}. In this section, the variable will be non-connected, i.e., it will have at least one component that induces a non-connected subgraph. 	In this case, problems~\eqref{Eq:PartialProb} and~\eqref{Eq:PartialManip1} are no longer equivalent and, therefore, the derivations that follow do not apply. We propose a small trick to make these problems equivalent.

	Let~$x_l$ be a component whose induced subgraph~$\mathcal{G}_l = (\mathcal{V}_l,\mathcal{E}_l)$ is non-connected. Then, the constraint~$x_l^{(i)} = x_l^{(j)}$, $(i,j) \in \mathcal{E}_l$, in~\eqref{Eq:PartialManip1} fails to enforce equality on all the copies of~$x_l$. To overcome this, we propose creating a ``virtual'' path to connect the disconnected components of~$\mathcal{G}_l$. This will allow the nodes in~$\mathcal{G}_l$ to reach an agreement on an optimal value for~$x_l$. Since our goal is to minimize communications, we would like to find the ``shortest path'' between these disconnected components, that is, to find an optimal \textit{Steiner tree}.

	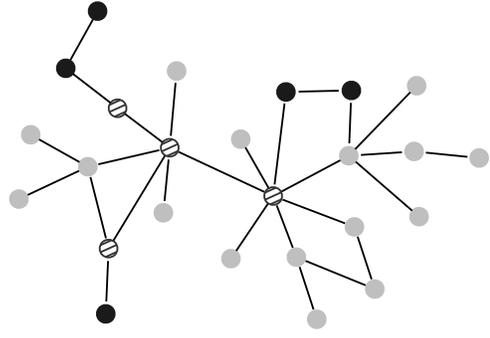
\begin{figure}[t]
  \centering
  \psscalebox{0.87}{
    \begin{pspicture}(6.0,5.3)
        \def\nodesimp{
          \pscircle*[linecolor=black!25!white](0,0){0.15}
        }
        \def\nodeterm{
          \pscircle*[linecolor=black!90!white](0,0){0.15}
        }
         \def\nodestein{
           \pscircle[fillstyle=hlines*,linecolor=black!80!white,hatchcolor=black!80!white,hatchsep=2pt,hatchangle=160](0,0){0.15}
         }

        \psrotate(2.5,3.0){45}{
       	\rput(1.247200,2.922000){\rnode{N0}{\nodesimp}}	  
				\rput(2.019781,3.556917){\rnode{N1}{\nodestein}}	  
				\rput(0.929200,4.236700){\rnode{N2}{\nodesimp}}	  
				\rput(2.615100,1.917500){\rnode{N3}{\nodestein}}	  
				\rput(2.205218,1.005361){\rnode{N4}{\nodesimp}}	  
				\rput(2.879054,2.882035){\rnode{N5}{\nodesimp}}	  
				\rput(3.867600,1.535600){\rnode{N6}{\nodesimp}}	  
				\rput(1.882690,4.547475){\rnode{N7}{\nodestein}}	  
				\rput(0.658078,5.199245){\rnode{N8}{\nodesimp}}	  
				\rput(4.602102,2.214207){\rnode{N9}{\nodeterm}}	  
				\rput(4.619707,0.876559){\rnode{N10}{\nodesimp}}	  
				\rput(3.159143,0.705314){\rnode{N11}{\nodesimp}}  
				\rput(-0.165185,4.631585){\rnode{N12}{\nodesimp}}	
				\rput(1.755478,5.539351){\rnode{N13}{\nodeterm}}	  
				\rput(2.923960,4.312936){\rnode{N14}{\nodesimp}}	  
				\rput(5.358214,1.559764){\rnode{N15}{\nodesimp}}	  
				\rput(0.268552,3.127543){\rnode{N16}{\nodestein}}	
				\rput(3.968349,0.117789){\rnode{N17}{\nodesimp}}	  
				\rput(2.717151,5.813546){\rnode{N18}{\nodeterm}}	  
				\rput(1.753251,0.113327){\rnode{N19}{\nodesimp}}	  
				\rput(-0.468233,2.451415){\rnode{N20}{\nodeterm}}	
				\rput(2.707175,-0.186721){\rnode{N21}{\nodesimp}}	
				\rput(3.878797,2.904735){\rnode{N22}{\nodeterm}}	  
				\rput(1.482241,1.696233){\rnode{N23}{\nodesimp}}	  
				\rput(5.252491,0.102231){\rnode{N24}{\nodesimp}}	  
				}

				\ncline[nodesep=0.140000cm]{-}{N16}{N2}
				\ncline[nodesep=0.140000cm]{-}{N4}{N21}
				\ncline[nodesep=0.140000cm]{-}{N9}{N22}
				\ncline[nodesep=0.140000cm]{-}{N0}{N1}
				\ncline[nodesep=0.140000cm]{-}{N1}{N2}
				\ncline[nodesep=0.140000cm]{-}{N1}{N3}
				\ncline[nodesep=0.140000cm]{-}{N1}{N7}
				\ncline[nodesep=0.140000cm]{-}{N7}{N13}
				\ncline[nodesep=0.140000cm]{-}{N1}{N14}
				\ncline[nodesep=0.140000cm]{-}{N1}{N16}
				\ncline[nodesep=0.140000cm]{-}{N2}{N8}
				\ncline[nodesep=0.140000cm]{-}{N2}{N12}
				\ncline[nodesep=0.140000cm]{-}{N3}{N4}
				\ncline[nodesep=0.140000cm]{-}{N3}{N5}
				\ncline[nodesep=0.140000cm]{-}{N3}{N6}
				\ncline[nodesep=0.140000cm]{-}{N3}{N11}
				\ncline[nodesep=0.140000cm]{-}{N3}{N22}
				\ncline[nodesep=0.140000cm]{-}{N3}{N23}
				\ncline[nodesep=0.140000cm]{-}{N4}{N19}
				\ncline[nodesep=0.140000cm]{-}{N6}{N9}
				\ncline[nodesep=0.140000cm]{-}{N6}{N10}
				\ncline[nodesep=0.140000cm]{-}{N6}{N15}
				\ncline[nodesep=0.140000cm]{-}{N6}{N17}
				\ncline[nodesep=0.140000cm]{-}{N10}{N24}
				\ncline[nodesep=0.140000cm]{-}{N11}{N21}
				\ncline[nodesep=0.140000cm]{-}{N13}{N18}
				\ncline[nodesep=0.140000cm]{-}{N16}{N20}

    \end{pspicture}
  }
  \vspace{-0.2cm}
  \caption{
			Example of an optimal Steiner tree: black nodes are required and striped nodes are Steiner.
  }
  \label{Fig:SteinerTree}
  \end{figure}

\mypar{Steiner tree problem}
	Let~$\mathcal{G} = (\mathcal{V},\mathcal{E})$ be an undirected graph and let~$\mathcal{R} \subseteq \mathcal{V}$ be a set of \textit{required nodes}. A Steiner tree is any tree in $\mathcal{G}$ that contains the required nodes, i.e., it is an acyclic connected graph $(\mathcal{T},\mathcal{F}) \subseteq \mathcal{G}$ such that $\mathcal{R} \subseteq \mathcal{T}$. The set of nodes in the tree that are not required are called \textit{Steiner nodes}, and will be denoted with $\mathcal{S}:= \mathcal{T} \backslash \mathcal{R}$. In the \textit{Steiner tree problem}, each edge $(i,j) \in \mathcal{E}$ has a cost~$c_{ij}$ associated, and the goal is to find a Steiner tree whose edges have a minimal cost. This is exactly our problem if we make~$c_{ij} = 1$ for all edges and~$\mathcal{R} = \mathcal{V}_l$. The Steiner tree problem is illustrated in Fig.~\ref{Fig:SteinerTree}, where the required nodes are black and the Steiner nodes are striped. Unfortunately, computing optimal Steiner trees is NP-hard~\cite{Garey77-SteinerNPComplete}. There are, however, many heuristic algorithms, some even with approximation guarantees. The Steiner tree problem can be formulated as~\cite{Williamson02-PrimalDualMethodApproximation}
	\begin{equation}\label{Eq:SteinerProb}
		\begin{array}{ll}
			\underset{\{z_{ij}\}_{(i,j) \in \mathcal{E}}}{\text{minimize}} & \underset{(i,j)\in \mathcal{E}}{\sum} c_{ij} z_{ij} \vspace{0.1cm}\\
			\text{subject to} & \underset{\begin{subarray}{c}i \in \mathcal{U} \\ j \not\in \mathcal{U} \end{subarray}}{\sum} z_{ij} \geq 1\,,\quad \forall_\mathcal{U} \,:\, 0<|\mathcal{U} \cap \mathcal{R}| <|\mathcal{R}| \\
			& z_{ij} \in \{0,1\}\,,\quad (i,j) \in \mathcal{E}\,,
		\end{array}
	\end{equation}
	where~$\mathcal{U}$ in the first constraint is any subset of nodes that separates at least two required nodes. The optimization variable is constrained to be binary, and an optimal value $z_{ij}^\star = 1$ means that edge~$(i,j)$ was selected for the Steiner tree. Let~$h(z):= \sum_{(i,j)\in \mathcal{E}} c_{ij} z_{ij}$ denote the objective of~\eqref{Eq:SteinerProb}. We say that an algorithm for~\eqref{Eq:SteinerProb} has an approximation ratio of~$\alpha$ if it produces a feasible point~$\bar{z}$ such $h(\bar{z}) \leq \alpha h(z^\star)$, for any problem instance. For example, the primal-dual algorithm for combinatorial problems~\cite{Williamson02-PrimalDualMethodApproximation,Goemans97-PrimalDualMethod} has an approximation ratio of~$2$. This number has been decreased in a series of works, the smallest one being $1+\text{ln}\,	3/2\simeq 1.55$, provided by~\cite{Robins00-ImprovedSteinerTree}.

\mypar{Algorithm generalization}
	To make Algorithms~\ref{Alg:Conn} and~\ref{Alg:Kekatos} applicable to a non-connected variable, we propose the following preprocessing step. For every component~$x_l$ that induces a disconnected subgraph~$\mathcal{G}_l=(\mathcal{V}_l,\mathcal{E}_l)$, compute a Steiner tree $(\mathcal{T}_l,\mathcal{F}_l) \subseteq \mathcal{G}$ using~$\mathcal{V}_l$ as required nodes. Let~$\mathcal{S}_l := \mathcal{T}_l \backslash \mathcal{V}_l$ denote the Steiner nodes in that tree. The functions of these Steiner nodes do not depend on~$x_l$, i.e., $x_l \not\in S_p$ for all~$p \in \mathcal{S}_l$. Define a new induced graph as $\mathcal{G}_l'=(\mathcal{V}_l',\mathcal{E}_l')$, with $\mathcal{V}_l' := \mathcal{T}_l$ and~$\mathcal{E}_l' := \mathcal{E}_l\cup\mathcal{F}_l$. Then, we can create copies of~$x_l$ in all nodes in~$\mathcal{V}_l'$, and write~\eqref{Eq:PartialProb} equivalently as
	\begin{equation}\label{Eq:NonConnectedManip}
	  \begin{array}{ll}
		  \underset{\{\bar{x}_l\}_{l=1}^n}{\text{minimize}} & f_1(x_{S_1}^{(1)}) + f_2(x_{S_2}^{(2)}) + \cdots + f_P(x_{S_P}^{(P)}) \\
		  \text{subject to} & x_l^{(i)} = x_l^{(j)},\quad (i,j) \in \mathcal{E}_l'\,,\,\,l=1,\ldots,n\,,
		\end{array}
	\end{equation}
	where~$\bar{x}_l := \{x_l^{(p)}\}_{p \in \mathcal{V}_l'}$ denotes the set of all copies of~$x_l$, and~$\{\bar{x}_l\}_{l=1}^L$, the optimization variable, represents the set of all copies. Note that the function at node~$p$ remains unchanged: it only depends on $x_{S_p}^{(p)} := \{x_l^{(p)}\}_{l \in S_p}$, although node~$p$ can now have more copies, namely, $x_{S_p \cup S_p'}^{(p)}$, where $S_p'$ is the set of components of which node~$p$ is a Steiner node. Of course, when a component~$x_l$ is connected, we set $\mathcal{G}_l' = \mathcal{G}_l$; also, if a node~$p$ is not Steiner for any component, $S_p' = \emptyset$. If we repeat the analysis of the previous section replacing problem~\eqref{Eq:PartialManip1} by~\eqref{Eq:NonConnectedManip}, we get Algorithm~\ref{Alg:NonConn}.

  \begin{algorithm}
    \caption{Algorithm for a non-connected variable}
    \label{Alg:NonConn}
    \begin{algorithmic}[1]
    \small
    \algrenewcommand\algorithmicrequire{\textbf{Preprocessing (centralized):}}
    \Require
    \State Set $S_p' = \emptyset$ for all~$p \in \mathcal{V}$, and $\mathcal{V}_l'=\mathcal{V}_l$ for all~$l = \{1,\ldots,n\}$
    \ForAll{$l \in \{1,\ldots,n\}$ such that $x_l$ is non-connected}
			\State Compute a Steiner tree $(\mathcal{T}_l, \mathcal{F}_l)$, where $\mathcal{V}_l$ are required nodes
			\State Set $\mathcal{V}_l'= \mathcal{T}_l$ and $\mathcal{S}_l := \mathcal{T}_l \backslash \mathcal{V}_l$ (Steiner nodes)
			\State For all $p \in \mathcal{S}_l$, $S_p' = S_p' \cup \{x_l\}$
    \EndFor
    \Statex
    \algrenewcommand\algorithmicrequire{\textbf{Main algorithm (distributed):}}
    \Require
    \algrenewcommand\algorithmicrequire{\textbf{Initialization:}}
    \Require Set $\gamma_{l}^{(p),1}\!\! = x_l^{(p),1}\!\! = 0$, for $l \in S_p \cup S_p'$, $p \in \mathcal{V}$; $k=1$
    \Repeat
		\label{SubAlg:NonConn_FirstStep}
    \For{$c =1,\ldots,C$}  \label{SubAlg:NonConn_ForColors}
        \ForAll{$p \in \mathcal{C}_c$ [in parallel]}
        \ForAll{$l \in S_p \cup S_p'$}
            $$
                v_l^{(p),k} = \gamma_l^{(p),k}-
                \rho \sum_{\begin{subarray}{c}
                             j \in \mathcal{N}_p \cap \mathcal{V}_l' \\
                             C(j) < c
                           \end{subarray}
                }x_l^{(j),k+1} - \rho \sum_{\begin{subarray}{c}
                             j \in \mathcal{N}_p \cap \mathcal{V}_l' \\
                             C(j) > c
                           \end{subarray}
                }x_l^{(j),k}
            $$
            \label{SubAlg:NonConn_Vec}
         \EndFor
        \Statex
        \State Set~$x_{S_p \cup S_p'}^{(p),k+1}$ as the solution of
            $$
            \underset{x_{S_p\cup S_p'}^{(p)}}{\arg\min}
            \,
						f_p(x_{S_p}^{(p)}) + \sum_{l \in S_p\cup S_p'} \biggl( {v_l^{(p),k}}^\top x_l^{(p)}  + \frac{\rho}{2} D_{p,l}\Bigl(x_l^{(p)}\Bigr)^2 \biggr)
            $$
           \label{SubAlg:NonConn_Prob}
        \State For each~$l \in S_p\cup S_p'$, send~$x_l^{(p),k+1}$ to $\mathcal{N}_p \cap \mathcal{V}_l'$
        \label{SubAlg:NonConn_Comm}
    \EndFor
    \EndFor \label{SubAlg:NonConn_EndForColors}

    \ForAll{$p \in \mathcal{V}$ and $l \in S_p \cup S_p'$ [in parallel]} \vspace{0.15cm}
    \hfill

        $
            \gamma_l^{(p),k+1} = \gamma_l^{(p),k} + \rho \sum_{j \in \mathcal{N}_p \cap \mathcal{V}_l'} (x_l^{(p),k+1} -  x_l^{(j),k+1})
        $\label{SubAlg:NonConn_DualVar}  \vspace{0.15cm}

    \EndFor
    \State $k \gets k+1$
    \Until{some stopping criterion is met}
    \label{SubAlg:NonConn_FinalStep}
    \end{algorithmic}
  \end{algorithm}

  Algorithm~\ref{Alg:NonConn} has two parts: a preprocessing step, which is new, and the main algorithm, which it essentially Algorithm~\ref{Alg:Conn} with some small adaptations. We assume the preprocessing step can be done in a centralized way, before the execution of the main algorithm. In fact, the preprocessing only requires knowing the communication network~$\mathcal{G}$ and the nodes' dependencies, but not the specific the functions~$f_p$. Regarding the main algorithm, it is similar to Algorithm~\ref{Alg:Conn} except that each node, in addition to estimating the components its function depends on, it also estimates the components for which it is a Steiner node. The additional computations are, however, very simple: if node~$p$ is a Steiner node for component~$x_l$, it updates it as  $x_l^{(p),k+1} = -(1/(\rho \,D_{p,l}))v_l^{(p),k}$ in step~\ref{SubAlg:NonConn_Prob}; since~$f_p$ does not depend on~$x_l$, the problem corresponding to the update of~$x_l$ becomes a quadratic problem for which there is a closed-form solution. Note that~$D_{p,l}$ is now defined as the degree of node~$p$ in the subgraph~$\mathcal{G}_l'$. The steps we took to generalize Algorithm~\ref{Alg:Conn} to a non-connected variable can be easily applied the same way to Algorithm~\ref{Alg:Kekatos}.

\section{Applications}
\label{Sec:Applications}

	In this section we describe how the proposed algorithms can be used to solve distributed MPC and network flow problems.

	\mypar{Distributed MPC} MPC is a popular control strategy for discrete-time systems~\cite{Morari99-MPCPastPresentFuture}. It assumes a state-space model for the system, where the state at time~$t$, here denoted with~$x[t] \in \mathbb{R}^n$, evolves according to $x[t+1] = \Theta^t(x[t],u[t])$, where~$u[t] \in \mathbb{R}^m$ is the input at time~$t$ and~$\Theta^t:\mathbb{R}^n\times \mathbb{R}^m \xrightarrow{} \mathbb{R}^n$ is a map that gives the system dynamics at each time instant~$t$. Given a time-horizon~$T$, an MPC implementation consists of measuring the state at time~$t=0$, computing the desired states and inputs for the next~$T$ time steps, applying~$u[0]$ to the system, setting~$t=0$, and repeating the process. The second step, i.e., computing the desired states and inputs for a given time horizon~$T$, is typically addressed by solving
	\begin{equation}\label{Eq:MPC}
    \begin{array}{ll}
      \underset{\bar{x},\bar{u}}{\text{minimize}} & \Phi(x[T]) + \sum_{t=0}^{T-1} \Psi^t(x[t], u[t]) \\
      \text{subject to} & x[t+1] = \Theta^t(x[t], u[t])\,,\quad t=0,\ldots,T-1 \\
                        & x[0] = x^0\,,
    \end{array}
  \end{equation}
	where the variable is~$(\bar{x},\bar{u}):= (\{x[t]\}_{t=0}^T, \{u[t]\}_{t=0}^{T-1})$. While~$\Phi$ penalizes deviations of the final state~$x[T]$ from our goal, $\Psi^t$ usually measures, for each~$t=0,\ldots,T-1$, some type of energy consumption that we want to minimize. Regarding the constraints of~\eqref{Eq:MPC}, the first one enforces the state to follow the system dynamics, and the second one encodes the initial measurement~$x^0$.

	We solve~\eqref{Eq:MPC} in the following distributed scenario. There is a set of~$P$ systems that communicate through a communication network~$\mathcal{G} = (\mathcal{V}, \mathcal{E})$. Each system has a state~$x_p[t] \in \mathbb{R}^{n_p}$ and a local input~$u_p[t] \in \mathbb{R}^{m_p}$, where~$n_1 + \cdots + n_P = n$ and $m_1 + \cdots + m_P = m$. The state of system~$p$ evolves as $x_p[t+1] = \Theta_p^t\bigl(\{x_j[t],u_j[t]\}_{j \in \Omega_p}\bigr)$, where $\Omega_p \subseteq \mathcal{V}$ is the set of nodes whose state and/or input influences~$x_p$ (we assume $\{p\}\subseteq \Omega_p$ for all~$p$). Note that, in contrast with what is usually assumed, $\Omega_p$ is not necessarily a subset of the neighbors of node~$p$. In other words, two systems that influence each other may be unable to communicate directly. This is illustrated in Fig.~\ref{SubFig:MPCNonConnected} where, for example, the state/input of node~$3$ influences the state evolution of node~$1$ (dotted arrow), but there is no communication link (solid line) between them. Finally, we assume functions~$\Phi$ and~$\Psi^t$ in~\eqref{Eq:MPC} can be decomposed, respectively, as $	\Phi(x[T]) = \sum_{p=1}^P \Phi_p(\{x_j[T]\}_{j \in \Omega_p})$ and $	\Psi^t(x[t],u[t]) = \sum_{p=1}^P \Psi_p^t (\{x_j[t], u_j[t]\}_{j \in \Omega_p})$, where~$\Phi_p$ and~$\Psi_p^t$ are both associated to node~$p$. In sum, we solve
	\begin{equation}\label{Eq:DistributedMPC}
	  \begin{array}{cl}
	  	\underset{\bar{x},\bar{u}}{\min} &
				\sum_{p=1}^P\Bigl[\Phi_p(\{x_j[T]\}_{j \in \Omega_p})
				\\
				&\phantom{aaaaaaaaaaaaaaa}+ \sum_{t=0}^{T-1} \Psi_p^t (\{x_j[t], u_j[t]\}_{j \in \Omega_p})\Bigr] \\
			\text{s.t.} & x_p[t+1] = \Theta_p^t\bigl(\{x_j[t],u_j[t]\}_{j \in \Omega_p}\bigr)\,,\,\, t = 0,\ldots,T-1 \\
			            & x_p[0] = x_p^0 \\
			            & p = 1,\ldots,P\,,
	  \end{array}
	\end{equation}
	where~$x_p^0$ is the initial measurement at node~$p$. The variable in~\eqref{Eq:DistributedMPC} is $(\bar{x},\bar{u}) := \bigl(\{\bar{x}_p\}_{p=1}^{P} , \{\bar{u}_p\}_{p=1}^{P}\bigr)$, where $\bar{x}_p := \{x_p[t]\}_{t=0}^T$ and $\bar{u}_p := \{u_p[t]\}_{t=0}^{T-1}$. Problem~\eqref{Eq:DistributedMPC} can be written as~\eqref{Eq:PartialProb} by making
	\begin{multline*}
		f_p(\{\bar{x}_j,\bar{u}_j\}_{j \in \Omega_p}) = \Phi_p(\{x_j[T]\}_{j \in \Omega_p}) + \text{i}_{x_p[0] = x_p^0}(\bar{x}_p)
									 \\
									 + \sum_{t=0}^{T-1} \Bigl(
																				\Psi_p^t(\{x_j[t],u_j[t]\}_{j \in \Omega_p})
																				+
																				\text{i}_{\Gamma_p^t}(\{\bar{x}_j,\bar{u}_j\}_{j \in \Omega_p})
																			\Bigr)\,,
	\end{multline*}
	where $\text{i}_S(\cdot)$ is the indicator function of the set~$S$, i.e., $\text{i}_S(x) = +\infty$ if $x \not\in S$ and $\text{i}_S(x) = 0$ if $x \in S$, and $\Gamma_p^t := \{\{\bar{x}_j,\bar{u}_j\}_{j \in \Omega_p} \,:\, x_p[t+1] = \Theta_p^t\bigl(\{x_j[t],u_j[t]\}_{j \in \Omega_p}\bigr)\}$.

  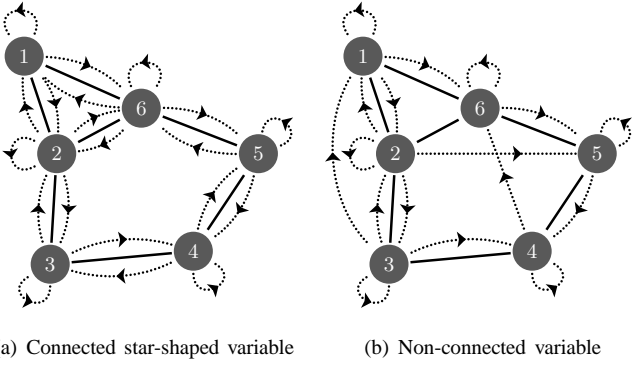
\begin{figure}[t]
  \centering
  \subfigure[Connected star-shaped variable]{\label{SubFig:MPCStar}
    \psscalebox{0.865}{
      \begin{pspicture}(4.5,4.5)
        \def\nodesimp{
          \pscircle*[linecolor=black!65!white](0,0){0.3}
        }
        \def\nodeshigh{
          \pscircle*[linecolor=black!25!white](0,0){0.3}
        }

        \rput(0.4,4.1){\rnode{C1}{\nodesimp}}   \rput(0.4,4.1){\small \textcolor{white}{$1$}}
        \rput(0.9,2.6){\rnode{C2}{\nodesimp}}   \rput(0.9,2.6){\small \textcolor{white}{$2$}}
        \rput(0.8,0.9){\rnode{C3}{\nodesimp}}   \rput(0.8,0.9){\small \textcolor{white}{$3$}}
        \rput(3.0,1.1){\rnode{C4}{\nodesimp}}   \rput(3.0,1.1){\small \textcolor{white}{$4$}}
        \rput(4.0,2.6){\rnode{C5}{\nodesimp}}   \rput(4.0,2.6){\small \textcolor{white}{$5$}}
        \rput(2.2,3.3){\rnode{C6}{\nodesimp}}   \rput(2.2,3.3){\small \textcolor{white}{$6$}}

        \psset{nodesep=0.33cm,linewidth=1.1pt}
        \ncline{-}{C1}{C2}
        \ncline{-}{C1}{C6}
        \ncline{-}{C2}{C3}
        \ncline{-}{C2}{C6}
        \ncline{-}{C3}{C4}
        \ncline{-}{C4}{C5}
        \ncline{-}{C5}{C6}

        \psset{linestyle=dotted,dotsep=0.7pt,arrowsize=7pt,arrowinset=0.2,arrowlength=0.55,linewidth=1.1pt,arcangleA=22,arcangleB=22,ArrowInside=->,ArrowInsidePos=0.56}
        \ncarc{-}{C1}{C2}
        \ncarc{-}{C2}{C1}
        \ncarc{-}{C1}{C6}
        \ncarc{-}{C6}{C1}
        \ncarc{-}{C2}{C3}
        \ncarc{-}{C3}{C2}
        \ncarc{-}{C2}{C6}
        \ncarc{-}{C6}{C2}
        \ncarc{-}{C3}{C4}
        \ncarc{-}{C4}{C3}
        \ncarc{-}{C4}{C5}
        \ncarc{-}{C5}{C4}
        \ncarc{-}{C6}{C5}
        \ncarc{-}{C5}{C6}

				\psset{ncurv=3}
        \nccurve[angleA=65,angleB=125]{-}{C1}{C1}
        \nccurve[angleA=150,angleB=210]{-}{C2}{C2}
        \nccurve[angleA=210,angleB=270]{-}{C3}{C3}
        \nccurve[angleA=270,angleB=330]{-}{C4}{C4}
        \nccurve[angleA=20,angleB=80]{-}{C5}{C5}
        \nccurve[angleA=55,angleB=115]{-}{C6}{C6}

      \end{pspicture}
    }
  }
  \hfill
  \subfigure[Non-connected variable]{\label{SubFig:MPCNonConnected}
    \psscalebox{0.865}{
      \begin{pspicture}(4.5,4.5)
        \def\nodesimp{
          \pscircle*[linecolor=black!65!white](0,0){0.3}
        }

        \rput(0.4,4.1){\rnode{C1}{\nodesimp}}   \rput(0.4,4.1){\small \textcolor{white}{$1$}}
        \rput(0.9,2.6){\rnode{C2}{\nodesimp}}   \rput(0.9,2.6){\small \textcolor{white}{$2$}}
        \rput(0.8,0.9){\rnode{C3}{\nodesimp}}   \rput(0.8,0.9){\small \textcolor{white}{$3$}}
        \rput(3.0,1.1){\rnode{C4}{\nodesimp}}   \rput(3.0,1.1){\small \textcolor{white}{$4$}}
        \rput(4.0,2.6){\rnode{C5}{\nodesimp}}   \rput(4.0,2.6){\small \textcolor{white}{$5$}}
        \rput(2.2,3.3){\rnode{C6}{\nodesimp}}   \rput(2.2,3.3){\small \textcolor{white}{$6$}}

        \psset{nodesep=0.33cm,linewidth=1.1pt}
        \ncline{-}{C1}{C2}
        \ncline{-}{C1}{C6}
        \ncline{-}{C2}{C3}
        \ncline{-}{C2}{C6}
        \ncline{-}{C3}{C4}
        \ncline{-}{C4}{C5}
        \ncline{-}{C5}{C6}

        \psset{linestyle=dotted,dotsep=0.7pt,arrowsize=7pt,arrowinset=0.2,arrowlength=0.55,linewidth=1.1pt,arcangleA=22,arcangleB=22,ArrowInside=->,ArrowInsidePos=0.56}
        \ncarc{-}{C1}{C2}
        \ncarc{-}{C2}{C1}
        \ncarc[arcangleA=38,arcangleB=38]{-}{C3}{C1}
        \ncarc{-}{C2}{C3}
        \ncarc[arcangleA=0,arcangleB=0,ArrowInsidePos=0.65]{-}{C2}{C5}
        \ncarc{-}{C3}{C2}
        \ncarc{-}{C3}{C4}
        \ncarc[arcangleA=0,arcangleB=0]{-}{C4}{C6}
        \ncarc{-}{C1}{C6}
        \ncarc{-}{C6}{C5}
        \ncarc{-}{C5}{C4}

        \psset{ncurv=3}
        \nccurve[angleA=65,angleB=125]{-}{C1}{C1}
        \nccurve[angleA=150,angleB=210]{-}{C2}{C2}
        \nccurve[angleA=210,angleB=270]{-}{C3}{C3}
        \nccurve[angleA=270,angleB=330]{-}{C4}{C4}
        \nccurve[angleA=20,angleB=80]{-}{C5}{C5}
        \nccurve[angleA=55,angleB=115]{-}{C6}{C6}

      \end{pspicture}
    }
  }

  \caption{
		Two MPC scenarios. Solid lines represent links in the communication network and dotted arrows represent system interactions. \text{(a)} Connected variable where each induced subgraph is a star. \text{(b)} Non-connected variable because node~$5$ is influenced by~$(\bar{x}_2,\bar{u}_2)$, but not none of its neighbors are.
  }
  \label{Fig:MPC}
  \end{figure}

	We illustrate in Fig.~\ref{SubFig:MPCStar} the case where $\Omega_p \subseteq \mathcal{N}_p \cup \{p\}$, i.e.,  the state of node~$p$ is influenced by its own state/input and by the states/inputs of the systems with which it can communicate. Using our terminology, this corresponds to a connected variable, where each induced subgraph is a star: the center of the star is node~$p$, whose state is~$x_p$. Particular cases of this model have been considered, for example, in~\cite{Camponogara02-DistributedMPC,Keviczky06DecentralizedRecedingHorizonControl,Morosan10-DistributedMPCTemperature}, whose solutions are heuristics, and in~\cite{Wakasa08-DecentralizedMPCDualDecomp,Camponogara11-DistributedMPC,Conte12-ComputationalAspectsDistributedMPC,Summers12-DistributedMPConsensus}, whose solutions are optimization-based. The model we propose here is significantly more general, since it can handle scenarios where interacting nodes do not necessarily need to communicate, or even scenarios with a non-connected variable. Both cases are shown in Fig.~\ref{SubFig:MPCNonConnected}. For example, the subgraph induced by $(\bar{x}_3,\bar{u}_3)$ consists of the nodes~$\{1,2,3,4\}$ and is connected. (The reference for connectivity is always the communication network which, in the plots, is represented by solid lines.) Nodes~$1$ and~$3$, however, cannot communicate directly. This is an example of an induced subgraph that is not a star. On the other hand, the subgraph induced by~$(\bar{x}_2,\bar{u}_2)$ consists of the nodes~$\{1,2,3,5\}$. This subgraph is not connected, which implies that the optimization variable is non-connected. Situations like the above can be useful in scenarios where communications links are expensive or hard to establish. For instance, MPC can be used for temperature regulation of buildings~\cite{Morosan10-DistributedMPCTemperature}, where making wired connections between rooms, here viewed as systems, can be expensive. In that case, two adjacent rooms whose temperatures influence each other may not be able to communicate directly. The proposed MPC model can handle this scenario easily.

\mypar{MPC model for the experiments}
	We now present a simple linear MPC model, which will be used in our experiments in Section~\ref{Sec:ExperimentalResults}. Although simple, this model will illustrate all the cases considered above. We assume that systems are coupled though their inputs, i.e., $x_p[t+1] = A_p x_p[t] + \sum_{j \in \Omega_p} B_{pj} u_j[t]$, where~$A_p \in \mathbb{R}^{n_p \times n_p}$ and each~$B_{pj} \in \mathbb{R}^{n_p \times m_j}$ are arbitrary matrices, known only at node~$p$. Also, we assume~$\Phi_p$ and~$\Psi_p^t$ in~\eqref{Eq:DistributedMPC} are, respectively, $\Phi_p(\{x_j[T]\}_{j \in \Omega_p}) = x_p[T]^\top \bar{Q}_p^f x_p[T]$ and $\Psi_p^t(\{x_j[t]\}_{j \in \Omega_p}) = x_p[t]^\top \bar{Q}_p x_p[t] + u_p[t]^\top \bar{R}_p$, where~$\bar{Q}_p$ and~$\bar{Q}_p^f$ are positive semidefinite matrices, and~$\bar{R}_p$ is positive definite. Problem \eqref{Eq:DistributedMPC} then becomes
	\begin{equation}\label{Eq:DistributedMPC_SimpleModel}
  	\begin{array}{cl}
  		\underset{\begin{subarray}{c}x_1,\ldots,x_P\\u_1,\ldots,u_P\end{subarray}}{\text{minimize}} &
  		\sum_{p=1}^P u_p^\top R_p u_p + x_p^\top Q_p x_p \\
  		\text{subject to} & x_p = C_p \{u_j\}_{j \in \mathcal{S}_p} + D_p^0 \,,\,\, p = 1,\ldots,P\,,
  	\end{array}
  \end{equation}
	where, $x_p = (x_p[0],\ldots,x_p[T])$, $u_p = (u_p[0],\ldots,u_p[T-1])$, for each~$p$, and
	\begin{align*}
	  Q_p &= \begin{bmatrix}
		      	I_{T} \otimes \bar{Q}_p & 0\\
		      	0 & \bar{Q}_p^f
		      \end{bmatrix}\,,
		&
		R_p &= I_{T} \otimes \bar{R}_p\,,
		\\
		C_p &=
		\begin{bmatrix}
			0                  & 0                 & \cdots & 0     \\
      B_{p}              & 0                 & \cdots & 0     \\
      A_{pp}B_{p}        & B_{p}             & \cdots & 0     \\
       \vdots            & \vdots            & \ddots & \vdots\\
      A_{pp}^{T-1}B_{p}  & A_{pp}^{T-2}B_{p} & \cdots & B_{p} \\
		\end{bmatrix}\,,
		&D_p^0
		&=
		\begin{bmatrix}
      I           \\
      A_{pp}      \\
      A_{pp}^2    \\
      \vdots      \\
      A_{pp}^{T}  \\
    \end{bmatrix}
    x_p^0\,.
  \end{align*}
	In the entries of matrix~$C_p$, $B_p$ is the horizontal concatenation of the matrices $B_{pj}$, for all~$j \in \Omega_p$. One of the advantages of the model we are using is that all the variables~$x_p$ can be eliminated from~\eqref{Eq:DistributedMPC_SimpleModel}, yielding
	\begin{equation}\label{Eq:DistributedMPC_SimpleModelFinal}
  	\underset{u_1,\ldots,u_P}{\text{minimize}} \,\,\, \sum_{p=1}^P \{u_j\}_{j \in S_p}^\top E_p \{u_j\}_{j \in S_p} + w_p^\top \{u_j\}_{j \in S_p}\,,
  \end{equation}
  where each~$E_p$ is obtained by summing~$R_p$ with $C_p^\top Q_p C_p$ in the correct entries, and $w_p = 2C_p^\top Q_p D_p^0$. Our model thus leads to a very simple problem. In a centralized scenario, where all matrices~$E_p$ and all vectors~$w_p$ are known in the same location, the solution of~\eqref{Eq:DistributedMPC_SimpleModelFinal} can be computed by solving a linear system. Likewise, the problem in step~\ref{SubAlg:Conn_Prob} of Algorithm~\ref{Alg:Conn} (and steps~\ref{SubAlg:2BlockPartialProb} and~\ref{SubAlg:NonConn_Prob} of Algorithms~\ref{Alg:Kekatos} and~\ref{Alg:NonConn}, respectively) boils down to solving a linear system.

  \begin{figure}[h]
  \centering
  \psscalebox{0.865}{
		\begin{pspicture}(7,4.3)
			\def\nodesimp{
          \pscircle*[linecolor=black!65!white](0,0){0.3}
      }

      \rput(1,3.7){\rnode{N1}{\nodesimp}}
      \rput(0.3,2.3){\rnode{N2}{\nodesimp}}
      \rput(0.3,0.3){\rnode{N3}{\nodesimp}}
      \rput(2.7,1.6){\rnode{N4}{\nodesimp}}
      \rput(5,1.3){\rnode{N5}{\nodesimp}}
      \rput(4.0,3.3){\rnode{N6}{\nodesimp}}
      \rput(6.7,2.4){\rnode{N7}{\nodesimp}}

      \rput(1,3.7){\small \textcolor{white}{$1$}}    
      \rput(0.3,2.3){\small \textcolor{white}{$2$}}    
      \rput(0.3,0.3){\small \textcolor{white}{$3$}}  
      \rput(2.7,1.6){\small \textcolor{white}{$4$}}  
      \rput(5,1.3){\small \textcolor{white}{$5$}}      
      \rput(4.0,3.3){\small \textcolor{white}{$6$}}    
      \rput(6.7,2.4){\small \textcolor{white}{$7$}}  

      \psset{nodesep=0.35cm,linestyle=solid,arrowsize=5pt,arrowinset=0.1,labelsep=0.07}
      \ncline[]{->}{N1}{N2}\nbput{$\phi_{12}(x_{12})$}
      \ncline[]{->}{N2}{N3}\nbput{$\phi_{23}(x_{23})$}
      \ncline[]{->}{N2}{N4}\naput{$\phi_{24}(x_{24})$}
      \ncline[]{->}{N4}{N5}\nbput{$\phi_{45}(x_{45})$}
      \ncline[]{->}{N4}{N6}\nbput{$\phi_{46}(x_{46})$}
      \ncline[]{->}{N5}{N7}\nbput{$\phi_{57}(x_{57})$}
      \ncline[]{<-}{N3}{N4}\nbput{$\phi_{43}(x_{43})$}
      \ncline[]{->}{N1}{N6}\naput{$\phi_{16}(x_{16})$}
      \ncline[]{->}{N6}{N7}\naput{$\phi_{67}(x_{67})$}

    \end{pspicture}
  }
  \caption{
		A network flow problem: each edge has a variable~$x_{ij}$ representing the flow from node~$i$ to node~$j$ and also has a cost function~$\phi_{ij}(x_{ij})$.
	}
  \label{Fig:NetworkFlow}
  \end{figure}
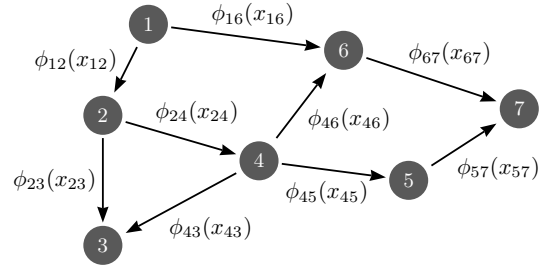

\mypar{Network flow}
	A network flow problem is typically formulated on a network with arcs (or directed edges), where an arc from node~$i$ to node~$j$ indicates a flow in that direction. In the example given in Fig.~\ref{Fig:NetworkFlow}, there can be a flow from node~$1$ to node~$6$, but not the opposite. Every arc~$(i,j) \in \mathcal{A}$ has associated a non-negative variable~$x_{ij}$ representing the amount of flow in that arc (from node~$i$ to node~$j$), and a cost function~$\phi_{ij}(x_{ij})$ that depends only on~$x_{ij}$. The goal is to minimize the sum of all the costs, while satisfying the laws of conservation of flow. External flow can be injected or extracted from a node, making that node a source or a sink, respectively. For example, in Fig.~\ref{Fig:NetworkFlow}, node~$1$ can only be a source, since it has only outward edges; in contrast, nodes~$3$ and~$7$ can only be sinks, since they have only inward edges. The remaining nodes may or may not be sources or sinks. We represent the network of flows with the node-arc incidence matrix~$B$, where the column associated to an arc from node~$i$ to node~$j$ has a~$-1$ in the $i$th entry, a $1$ in the $j$th entry, and zeros elsewhere. We assume the components of the variable~$x$ and the columns of~$B$ are in lexicographic order. For example, $x = (x_{12},x_{16},x_{23},x_{24},x_{43},x_{45},x_{46},x_{57},x_{67})$ would be the variable in Fig.~\ref{Fig:NetworkFlow}. The laws of conservation of flow are expressed as $B x = d$, where~$d \in \mathbb{R}^P$ is the vector of external inputs/outputs. The entries of~$d$ sum up to zero and~$d_p < 0$ (resp. $d_p > 0$) if node~$p$ is a source (resp. sink). When node~$p$ is neither a source nor a sink, $d_p = 0$. The problem we solve is
	\begin{equation}\label{Eq:NetworkFlow}
		\begin{array}{ll}
			\underset{x}{\text{minimize}} & \sum_{(i,j) \in \mathcal{A}} \phi_{ij}(x_{ij})\\
			\text{subject to} & Bx = d \\
			                  & x\geq 0\,,
		\end{array}
	\end{equation}
	which can be written as~\eqref{Eq:PartialProb} by setting
	\begin{multline*}
		f_p\Bigl(\{x_{pj}\}_{(p,j) \in \mathcal{A}},\{x_{jp}\}_{(j,p) \in \mathcal{A}}\Bigr) = \frac{1}{2}\sum_{(p,j) \in \mathcal{A}} \phi_{pj}(x_{pj}) \\+ \frac{1}{2}\sum_{(j,p) \in \mathcal{A}} \phi_{jp}(x_{jp}) + \text{i}_{b_p^\top x = d_p}(\{x_{pj}\}_{(p,j) \in \mathcal{A}},\{x_{jp}\}_{(j,p) \in \mathcal{A}})\,,
	\end{multline*}
	where~$b_p^\top$ is the $p$th row of~$B$. In words, $f_p$ consists of the sum of the functions associated to all arcs involving node~$p$, plus the indicator function of the set~$\{x\,:\,b_p^\top x = d_p\}$. This indicator function enforces the conservation of flow at node~$p$ and it only involves the variables~$\{x_{pj}\}_{(p,j) \in \mathcal{A}}$ and $\{x_{jp}\}_{(j,p) \in \mathcal{A}}$.
	
	Regarding the communication network~$\mathcal{G} = (\mathcal{V},\mathcal{E})$, we assume it consists of the underlying undirected network. This means that nodes~$i$ and~$j$ can exchange messages directly, i.e., $(i,j) \in \mathcal{E}$ for~$i<j$, if there is an arc between these nodes, i.e., $(i,j) \in \mathcal{A}$ or $(j,i) \in \mathcal{A}$. Therefore, in contrast with the flows, messages do not necessarily need to be exchanged satisfying the direction of the arcs. In fact, messages and flows might represent different physical quantities: think, for example, in a network of water pipes controlled by actuators at each pipe junction; while the pipes might enforce a direction in the flow of water (by using, for example, special valves), there is no reason to impose the same constraint on the electrical signals exchanged by the actuators. In problem~\eqref{Eq:NetworkFlow}, the subgraph induced by~$x_{ij}$, $(i,j) \in \mathcal{A}$, consists only of nodes~$i$ and~$j$ and an edge connecting them. This makes the variable in~\eqref{Eq:NetworkFlow} connected and star-shaped. Next we discuss the functions~$\phi_{ij}$ used in our simulations.

\mypar{Models for the experiments}
	We considered two instances of~\eqref{Eq:NetworkFlow}: a simple instance and a complex instance. While the simple instance makes all the algorithms we consider applicable, the (more) complex instance can be solved only by a subset of algorithms, but it provides a more realistic application. The simple instance uses $\phi_{ij} = \frac{1}{2}(x_{ij} - a_{ij})^2$, where~$a_{ij} > 0$, as the cost function for each arc~$(i,j)$ and no constraints besides the conservation of flow, i.e., we drop the nonnegativity constraint $x \geq 0$ in~\eqref{Eq:NetworkFlow}. The reason for dropping this constraint was to make the algorithm in~\cite{Zargham12-AcceleratedDualDescent} applicable. The other instance we consider is~\cite[Ch.17]{Ahuja93-NetworkFlows}:
  \begin{equation}\label{Eq:MultiComm2}
			\begin{array}{cl}
				\underset{x = \{x_{ij}\}_{(i,j) \in \mathcal{A}}}{\text{minimize}} & \sum_{(i,j) \in \mathcal{A}} \frac{x_{ij}}{c_{ij} - x_{ij}} \\
				\text{subject to} & B x = d \\
				                  & 0 \leq x_{ij} \leq c_{ij}\,, \quad (i,j) \in \mathcal{A}\,,
			\end{array}
	\end{equation}
	where~$c_{ij}$ represents the capacity of the arc $(i,j) \in \mathcal{A}$. Problem~\eqref{Eq:MultiComm2} has the same format of~\eqref{Eq:NetworkFlow} except for the additional capacity constraints~$x_{ij} \leq c_{ij}$, and it models overall system delays on multicommodity flow problems~\cite[Ch.17]{Ahuja93-NetworkFlows}.	
	If we apply Algorithm~\ref{Alg:Conn} to problem~\eqref{Eq:MultiComm2}, node~$p$ has to solve at each step
	\begin{equation}\label{Eq:NetFlowEachNode}
			\begin{array}{cl}
				\underset{y= (y_1,\ldots,y_{D_p})}{\text{minimize}} & \sum_{i=1}^{D_p} (\frac{y_i}{c_i - y_i} + v_i y_i + a_i y_i^2)
				\\
				\text{subject to} & b_p^\top y = d_p \\
				                  & 0 \leq y \leq c\,,
			\end{array}
	\end{equation}
	where each~$y_i$ corresponds to~$x_{pj}$ if $(p,j) \in \mathcal{A}$, or to~$x_{jp}$ if~$(j,p) \in \mathcal{A}$. Since projecting a point onto the set of constraints of~\eqref{Eq:NetFlowEachNode} is simple (see~\cite{VandenbergheLecs}), \eqref{Eq:NetFlowEachNode} can be solved efficiently with a projected gradient method. In fact, we will use the algorithm in~\cite{Birgin00-NonmonotoneSpectralGradient}, which is based on the Barzilai-Borwein method.

	In sum, we will solve two instances of~\eqref{Eq:NetworkFlow}: a simple one, where~$\phi_{ij}(x_{ij}) = (1/2)(x_{ij} - a_{ij})^2$ and with no constraints besides~$Bx = d$, and~\eqref{Eq:MultiComm2}, a more complex but realistic one.

\section{Experimental Results}
\label{Sec:ExperimentalResults}

	In this section we show experimental results of the proposed algorithms solving MPC and network flow problems. We start with network flow because it is simpler and more algorithms are applicable. Also, it will illustrate the inefficiency of solving~\eqref{Eq:PartialProb} with an algorithm designed for the global problem~\eqref{Eq:SeparableOptim}.

	\begin{figure*}
     \centering
     \hspace{0.1cm}
     \subfigure[Simple instance of~\eqref{Eq:NetworkFlow}]{\label{SubFig:NF_Dummy}
     \begin{pspicture}(8.0,5.4)
       \rput[bl](0.25,0.70){\includegraphics[width=7.5cm]{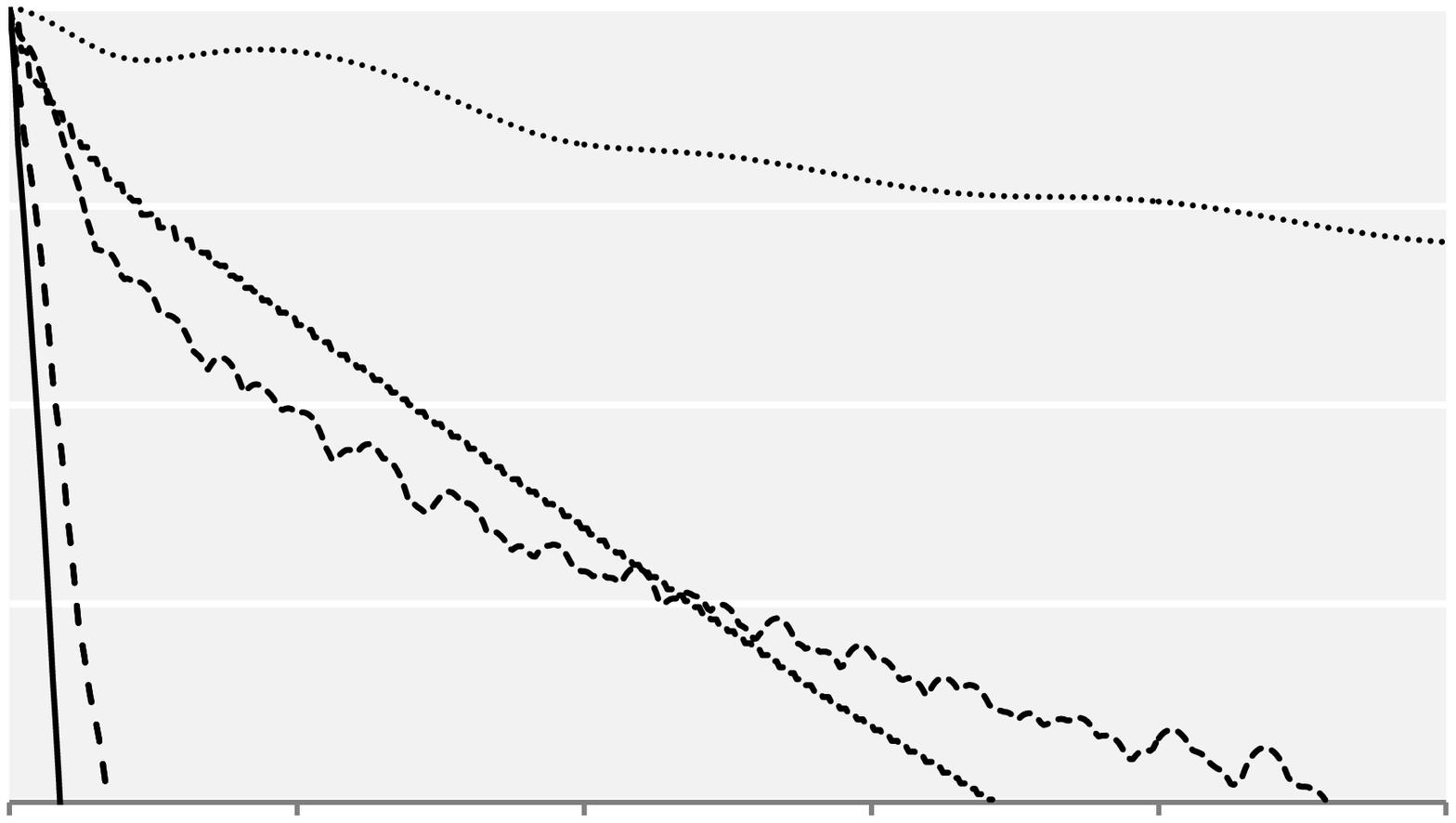}}
       \rput[b](4.00,0.001){\scriptsize \textbf{\sf Communication steps}}
       \rput[bl](-0.352,5.17){\mbox{\scriptsize \textbf{{\sf Relative error}}}}

       \rput[r](0.20,4.90){\scriptsize $\mathsf{10^{0\phantom{-}}}$}
       \rput[r](0.20,3.91){\scriptsize $\mathsf{10^{-1}}$}
       \rput[r](0.20,2.88){\scriptsize $\mathsf{10^{-2}}$}
       \rput[r](0.20,1.85){\scriptsize $\mathsf{10^{-3}}$}
       \rput[r](0.20,0.83){\scriptsize $\mathsf{10^{-4}}$}

       \rput[t](0.280,0.59){\scriptsize $\mathsf{0}$}
       \rput[t](1.763,0.59){\scriptsize $\mathsf{100}$}
       \rput[t](3.256,0.59){\scriptsize $\mathsf{200}$}
       \rput[t](4.750,0.59){\scriptsize $\mathsf{300}$}
       \rput[t](6.240,0.59){\scriptsize $\mathsf{400}$}
       \rput[t](7.738,0.59){\scriptsize $\mathsf{500}$}

       \rput[lb](1.21,1.14){\scriptsize \textbf{\sf Alg.\ref{Alg:Conn}}}
       \psline[linewidth=0.5pt](1.15,1.22)(0.55,1.1)
       \rput[lb](0.68,1.90){\scriptsize \textbf{\sf \cite{Kekatos12-DistributedRobustPowerStateEstimation,Boyd11-ADMM}}}
       \rput[lb](2.1,3.1){\scriptsize \textbf{\sf \cite{Zargham12-AcceleratedDualDescent}}}
			 \rput[lb](5.4,1.4){\scriptsize \textbf{\sf \cite{Nesterov03-Lectures}}}
			 \rput[lb](4.0,4.2){\scriptsize \textbf{\sf \cite{Mota12-DADMM}}}

     \end{pspicture}
     }
     \hfill
     \subfigure[Problem \eqref{Eq:MultiComm2}]{\label{SubFig:NF_delays}
     \begin{pspicture}(8.0,5.4)
       \rput[bl](0.25,0.70){\includegraphics[width=7.5cm]{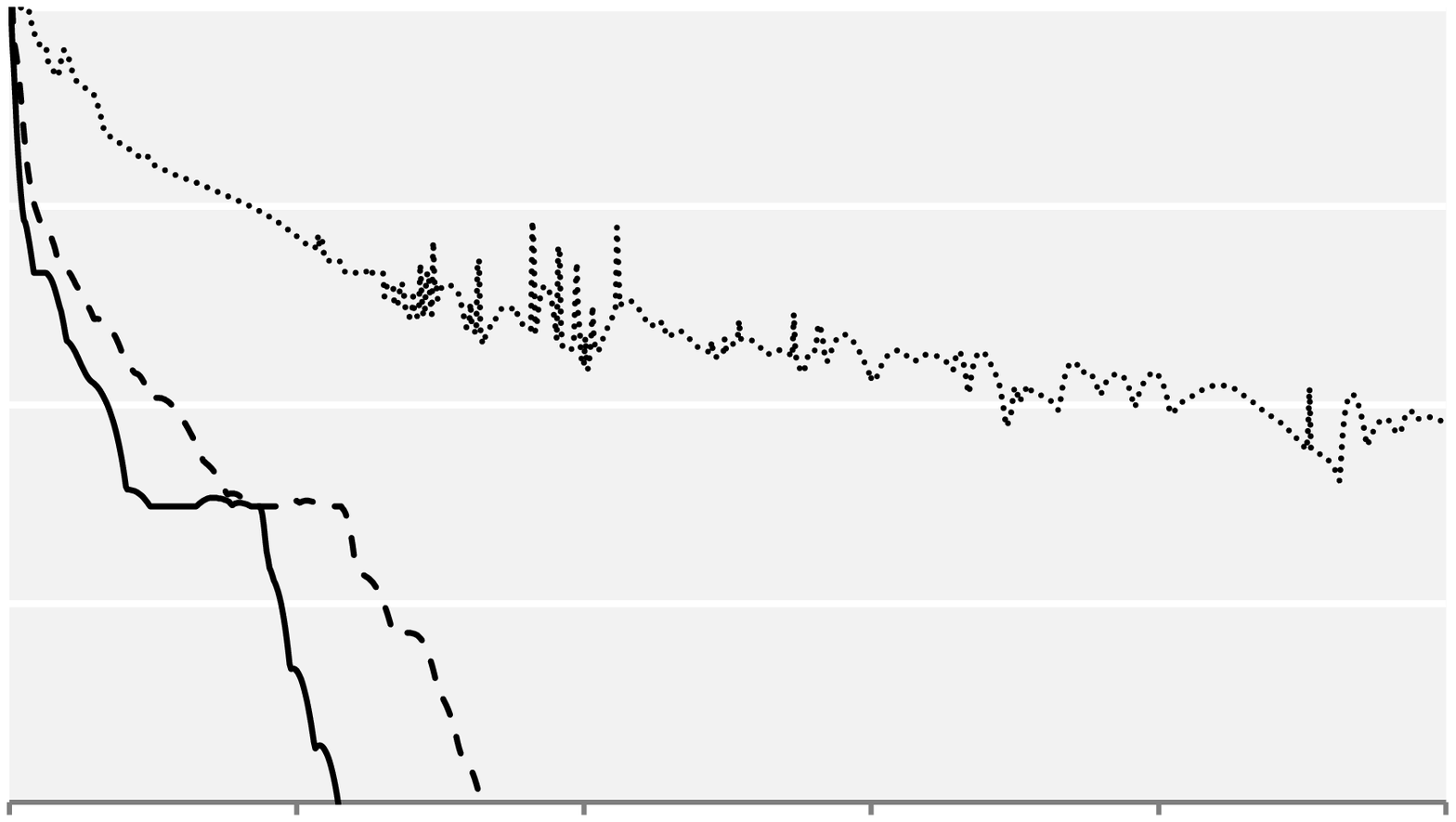}}
       \rput[b](4.00,0.001){\scriptsize \textbf{\sf Communication steps}}
       \rput[bl](-0.352,5.17){\mbox{\scriptsize \textbf{{\sf Relative error}}}}

       \rput[r](0.20,4.90){\scriptsize $\mathsf{10^{0\phantom{-}}}$}
       \rput[r](0.20,3.91){\scriptsize $\mathsf{10^{-1}}$}
       \rput[r](0.20,2.88){\scriptsize $\mathsf{10^{-2}}$}
       \rput[r](0.20,1.85){\scriptsize $\mathsf{10^{-3}}$}
       \rput[r](0.20,0.83){\scriptsize $\mathsf{10^{-4}}$}

       \rput[t](0.280,0.59){\scriptsize $\mathsf{0}$}
       \rput[t](1.763,0.59){\scriptsize $\mathsf{200}$}
       \rput[t](3.256,0.59){\scriptsize $\mathsf{400}$}
       \rput[t](4.750,0.59){\scriptsize $\mathsf{600}$}
       \rput[t](6.240,0.59){\scriptsize $\mathsf{800}$}
       \rput[t](7.738,0.59){\scriptsize $\mathsf{1000}$}

       \rput[rt](1.62,1.62){\scriptsize \textbf{\sf Alg.\ref{Alg:Conn}}}
       \rput[lb](2.10,2.20){\scriptsize \textbf{\sf \cite{Kekatos12-DistributedRobustPowerStateEstimation,Boyd11-ADMM}}}
       \rput[lb](4.40,3.40){\scriptsize \textbf{\sf \cite{Nesterov03-Lectures}}}

     \end{pspicture}
     }
     \hspace{0.1cm}
     \vspace{-0.1cm}
     \caption{
			 Results for the network flow problems on a network with~$2000$ nodes and~$3996$ edges. The results in~$\text{(a)}$ are for the simple instance of~\eqref{Eq:NetworkFlow}, where $\phi_{ij}(x_{ij}) = (1/2)(x_{ij} - a_{ij})^2$ and there are no nonnegativity constraints; and the results in~$\text{(b)}$ are for~\eqref{Eq:MultiComm2}.
		}
     \label{Fig:Exp_NF}
	\end{figure*}

\mypar{Network flows: experimental setup}
	As mentioned in the previous section, we solved two instances of~\eqref{Eq:NetworkFlow}. In both instances, we used a network with $2000$ nodes and $3996$ edges, generated randomly according to the Barabasi-Albert model~\cite{Barabasi99EmergenceOfScaling} with parameter~$2$, using the Network X Python package~\cite{NetworkX}. We made the simplifying assumption that between any two pairs of nodes there can be at most arc, as shown in Fig.~\ref{Fig:NetworkFlow}. Hence, the size of the variable~$x$ in~\eqref{Eq:NetworkFlow} is equal to the number of edges~$|\mathcal{E}|$, in this case, $3996$. The generated network had a diameter of~$8$, an average node degree of~$3.996$, and it was colored with~$3$ colors in Sage~\cite{sage}. This gives us the underlying (undirected) communication network. Then, we assigned randomly a direction to each edge, with equal probabilities for both directions, creating a directed network like in Fig.~\ref{Fig:NetworkFlow}. We also assigned to each edge a number drawn randomly from the set~$\{10, 20, 30, 40, 50, 100\}$. The probabilities were $0.2$ for the first four elements and~$0.1$ for~$50$ and~$100$. These numbers played the role of the $a_{ij}$'s in the simple instance of~\eqref{Eq:NetworkFlow} and the role of the capacities~$c_{ij}$ in~\eqref{Eq:MultiComm2}. To generate the vector~$d$ or, in other words, to determine which nodes are sources or sinks, we proceeded as follows. For each~$k = 1,\ldots,100$, we picked a source~$s_k$ randomly (uniformly) out of the set of~$2000$ nodes and then picked a sink~$r_k$ randomly (uniformly) out of the set of reachable nodes of~$s_k$. For example, if we were considering the network of Fig.~\ref{Fig:NetworkFlow} and picked~$s_k = 4$ as a source node, the set of its reachable nodes would be~$\{3, 5, 6, 7\}$. Next, we added to the entries~$s_k$ and~$r_k$ of~$d$ the values $-f_k/100$ and~$f_k/100$, respectively, where~$f_k$ is a number drawn randomly exactly as~$c_{ij}$ (or~$a_{ij}$). This corresponds to injecting a flow of quantity~$f_k/100$ at node~$s_k$ and extracting the same quantity at node~$r_k$. After repeating this process~$100$ times, for~$k = 1,\ldots,K$, we obtained vector~$d$.

	To assess the error given by each algorithm, we computed the solutions~$x^\star$ of the instances of~\eqref{Eq:NetworkFlow} in a centralized way. 	The simple instance of~\eqref{Eq:NetworkFlow} considers~$\phi_{ij}(x_{ij}) = (1/2)(x_{ij} - a_{ij})^2$ and ignores the constraint~$x\geq 0$. Thus, it is a simple quadratic program and has a closed-form solution: solving a linear system. Similarly, the problem Algorithm~\ref{Alg:Conn} (resp. Algorithm~\ref{Alg:Kekatos}) has to solve in step~\ref{SubAlg:Conn_Prob} (resp. step~\ref{SubAlg:2BlockPartialProb}) boils down to solving a linear system. To compute the solution of~\eqref{Eq:MultiComm2}, the complex instance of~\eqref{Eq:NetworkFlow}, we used CVXOPT~\cite{CVXOPT}.

	The plots we will show depict the relative error on the primal variable $\|x^k - x^\star\|_{\infty}/\|x^\star\|_{\infty}$, where~$x^k$ is the concatenation of the estimates at all nodes, versus the number of communication steps. A \textit{communication step} (CS) consists of all nodes communicating their current estimates to their neighbors. That is, in each CS, information flows on each edge in both directions and, hence, the total number of CSs is proportional to the total number of communications. All the algorithms we compared, discussed next, had a tuning parameter: $\rho$ for the ADMM-based algorithms (cf. Algorithms~\ref{Alg:Conn} and~\ref{Alg:Kekatos}), a Lipschitz constant~$L$ for a gradient-based algorithm, and a stepsize~$\alpha$ for a Newton-based algorithm. Suppose we selected~$\bar{\rho}$ for an ADMM-based algorithm. We say that~$\bar{\rho}$ \textit{has precision}~$\gamma$, if both $\bar{\rho} - \gamma$ and $\bar{\rho} + \gamma$ lead to worse results for that algorithm. A similar definition is used for~$L$ and~$\alpha$. We compared Algorithm~\ref{Alg:Conn}, henceforth denoted as Alg.~\ref{Alg:Conn}, against the ADMM-based algorithms in~\cite[\S7.2]{Boyd11-ADMM} and~\cite{Kekatos12-DistributedRobustPowerStateEstimation} (recall that Algorithm~\ref{Alg:Kekatos} describes~\cite{Kekatos12-DistributedRobustPowerStateEstimation}), Nesterov's method~\cite{Nesterov03-Lectures}, the distributed Newton method proposed in~\cite{Zargham12-AcceleratedDualDescent}, and D-ADMM~\cite{Mota12-DADMM}. For network flow problems, the algorithms in~\cite[\S7.2]{Boyd11-ADMM} and~\cite{Kekatos12-DistributedRobustPowerStateEstimation} coincide, i.e., they become exactly the same algorithm. This is not surprising since both are based on the same algorithm: the $2$-block ADMM. All the ADMM-based algorithms, including Alg.~\ref{Alg:Conn}, take~$1$ CS per iteration. The work in~\cite{Zargham12-AcceleratedDualDescent}, besides proposing a distributed Newton method, also describes the application of the gradient method to the dual of~\eqref{Eq:NetworkFlow}. Here, instead of applying the simple gradient method, we apply Nesterov's method~\cite{Nesterov03-Lectures}, which can be applied in the same conditions, has a better bound on the convergence rate, and is known to converge faster in practice. However, gradient methods, including Nesterov's method, require an objective that has a Lipschitz-continuous gradient. While this is the case of the objective of the dual of the simple instance of~\eqref{Eq:NetworkFlow}, the same does not happen for the objective of the dual of~\eqref{Eq:MultiComm2}. Therefore, in the latter case, we had to estimate a Lipschitz constant~$L$. Similarly to the ADMM-based algorithms, each iteration of a gradient algorithm takes $1$ CS per iteration. Regarding the distributed Newton algorithm in~\cite{Zargham12-AcceleratedDualDescent}, we implemented it with a parameter~$N=2$, which is the order of the approximation in the computation of the Newton direction, and fixed the stepsize~$\alpha$. With this implementation, each iteration takes $3$ CSs. Finally, D-ADMM~\cite{Mota12-DADMM} is currently the most communication-efficient algorithm for the global problem~\eqref{Eq:SeparableOptim}. As such, it makes all the nodes compute the full solution~$x^\star$, which has dimensions~$3996$ in this case. Thus, each message exchanged in one CS of D-ADMM is~$3996$ times larger than the messages exchanged by the other algorithms.

\mypar{Network flows: results}
	The results for the simple instance of~\eqref{Eq:NetworkFlow} are shown in Fig.~\ref{SubFig:NF_Dummy}. Of all the algorithms, Alg.~\ref{Alg:Conn} required the least amount of CSs to achieve any relative error between~$1$ and~$10^{-4}$. The second best were the algorithms~\cite{Boyd11-ADMM} and \cite{Kekatos12-DistributedRobustPowerStateEstimation}, whose lines coincide because they become the same algorithm when applied to network flows. Nesterov's method~\cite{Nesterov03-Lectures} and the Newton-based method~\cite{Zargham12-AcceleratedDualDescent} had a performance very similar to each other, but worse than the ADMM-based algorithms. However, D-ADMM~\cite{Mota12-DADMM}, which is also ADMM-based but solves the global problem~\eqref{Eq:SeparableOptim} instead, was the algorithm with the worst performance. Note that, in addition to requiring much more CSs than any other algorithm, each message exchanged by~\cite{Mota12-DADMM} is $3996$ times larger than a message exchanged by any other algorithm. This clearly shows that if we want to derive communication-efficient algorithms, we have to explore the structure of~\eqref{Eq:SeparableOptim}. Finally, we mention that the value of~$\rho$ in these experiments was~$2$ for all ADMM-based algorithms (precision~$1$), the Lipschitz constant~$L$ was~$70$ (precision~$5$), and the stepsize $\alpha$ was~$0.4$ (precision~$0.1$).

	Fig.~\ref{SubFig:NF_delays} shows the results for~\eqref{Eq:MultiComm2}. In this case,
	we were not able to make the algorithm in~\cite{Zargham12-AcceleratedDualDescent} converge (actually, it is not guaranteed to converge for this problem). It is visible in Fig.~\ref{SubFig:NF_delays} that this problem is harder to solve, since all algorithms required more CSs solve it. Again, Alg.~\ref{Alg:Conn} was the algorithm with the best performance. This time we did not find any choice for~$L$ that made Nesterov's algorithm~\cite{Nesterov03-Lectures}  achieve an error of~$10^{-4}$ in less than~$1000$ CSs. The best result we obtained was for~$L = 15000$. The parameter~$\rho$ was~$0.08$ for Alg.~\ref{Alg:Conn} and~$0.12$ for~\cite{Kekatos12-DistributedRobustPowerStateEstimation,Boyd11-ADMM}, both computed with precision~$0.02$.

	\begin{figure*}
     \centering
     \hspace{0.1cm}
     \subfigure[Network A with star-shaped induced subgraphs]{\label{SubFig:MPC_Barab100_Unstable}
     \begin{pspicture}(8.0,5.4)
       \rput[bl](0.25,0.70){\includegraphics[width=7.5cm]{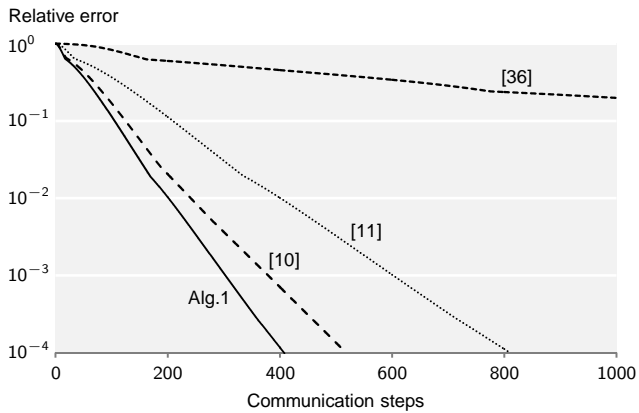}}
       \rput[b](4.00,0.001){\scriptsize \textbf{\sf Communication steps}}
       \rput[bl](-0.352,5.17){\mbox{\scriptsize \textbf{{\sf Relative error}}}}

       \rput[r](0.20,4.89){\scriptsize $\mathsf{10^{0\phantom{-}}}$}
       \rput[r](0.20,3.90){\scriptsize $\mathsf{10^{-1}}$}
       \rput[r](0.20,2.87){\scriptsize $\mathsf{10^{-2}}$}
       \rput[r](0.20,1.84){\scriptsize $\mathsf{10^{-3}}$}
       \rput[r](0.20,0.81){\scriptsize $\mathsf{10^{-4}}$}

       \rput[t](0.280,0.59){\scriptsize $\mathsf{0}$}
       \rput[t](1.764,0.59){\scriptsize $\mathsf{200}$}
       \rput[t](3.261,0.59){\scriptsize $\mathsf{400}$}
       \rput[t](4.749,0.59){\scriptsize $\mathsf{600}$}
       \rput[t](6.239,0.59){\scriptsize $\mathsf{800}$}
			 \rput[t](7.734,0.59){\scriptsize $\mathsf{1000}$}

			 \rput[rt](2.60,1.60){\scriptsize \textbf{\sf Alg.\ref{Alg:Conn}}}
       \rput[bl](3.11,1.88){\scriptsize \textbf{\sf \cite{Boyd11-ADMM}}}
       \rput[bl](4.20,2.27){\scriptsize \textbf{\sf \cite{Kekatos12-DistributedRobustPowerStateEstimation}}}
       \rput[lb](6.20,4.32){\scriptsize \textbf{\sf \cite{Nesterov03-Lectures}}}

     \end{pspicture}
     }
     \hfill
     \subfigure[Network B with star-shaped induced subgraphs]{\label{SubFig:MPC_PowerGridStable}
     \begin{pspicture}(8.0,5.4)
       \rput[bl](0.25,0.70){\includegraphics[width=7.5cm]{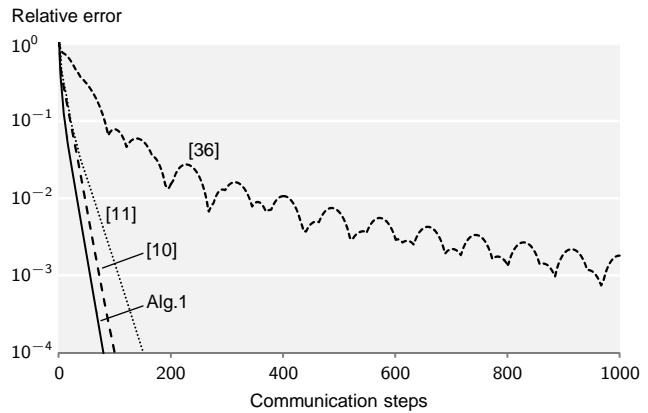}}
       \rput[b](4.00,0.001){\scriptsize \textbf{\sf Communication steps}}
       \rput[bl](-0.352,5.17){\mbox{\scriptsize \textbf{{\sf Relative error}}}}

       \rput[r](0.20,4.89){\scriptsize $\mathsf{10^{0\phantom{-}}}$}
       \rput[r](0.20,3.90){\scriptsize $\mathsf{10^{-1}}$}
       \rput[r](0.20,2.87){\scriptsize $\mathsf{10^{-2}}$}
       \rput[r](0.20,1.84){\scriptsize $\mathsf{10^{-3}}$}
       \rput[r](0.20,0.81){\scriptsize $\mathsf{10^{-4}}$}

       \rput[t](0.280,0.59){\scriptsize $\mathsf{0}$}
       \rput[t](1.764,0.59){\scriptsize $\mathsf{200}$}
       \rput[t](3.261,0.59){\scriptsize $\mathsf{400}$}
       \rput[t](4.749,0.59){\scriptsize $\mathsf{600}$}
       \rput[t](6.239,0.59){\scriptsize $\mathsf{800}$}
			 \rput[t](7.734,0.59){\scriptsize $\mathsf{1000}$}

       \rput[bl](1.44,1.32){\scriptsize \textbf{\sf Alg.\ref{Alg:Conn}}}
       \psline[linewidth=0.5pt](1.40,1.40)(0.83,1.20)
       \rput[bl](1.44,2.02){\scriptsize \textbf{\sf \cite{Boyd11-ADMM}}}
       \psline[linewidth=0.5pt](1.4,2.1)(0.85,1.9)
       \rput[lb](0.90,2.50){\scriptsize \textbf{\sf \cite{Kekatos12-DistributedRobustPowerStateEstimation}}}
       \rput[lb](2.0,3.35){\scriptsize \textbf{\sf \cite{Nesterov03-Lectures}}}

     \end{pspicture}
     }
     \hspace{0.1cm}

     \hspace{0.1cm}
     \hfill
     \subfigure[Network A with a generic connected variable]{\label{SubFig:MPC_Barab100NonStarUnstable}
     \begin{pspicture}(8.0,5.4)
       \rput[bl](0.25,0.70){\includegraphics[width=7.5cm]{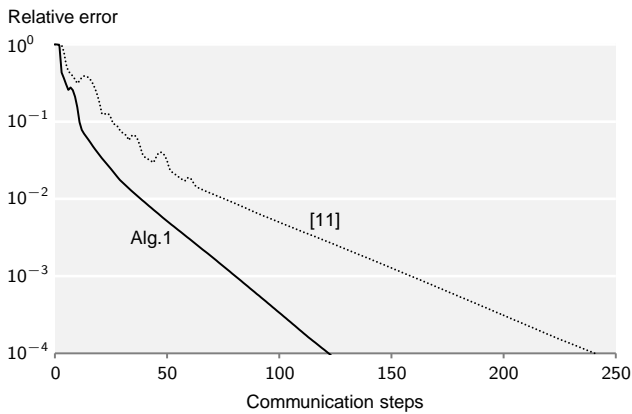}}
       \rput[b](4.00,0.001){\scriptsize \textbf{\sf Communication steps}}
       \rput[bl](-0.352,5.17){\mbox{\scriptsize \textbf{{\sf Relative error}}}}

       \rput[r](0.20,4.89){\scriptsize $\mathsf{10^{0\phantom{-}}}$}
       \rput[r](0.20,3.90){\scriptsize $\mathsf{10^{-1}}$}
       \rput[r](0.20,2.87){\scriptsize $\mathsf{10^{-2}}$}
       \rput[r](0.20,1.84){\scriptsize $\mathsf{10^{-3}}$}
       \rput[r](0.20,0.81){\scriptsize $\mathsf{10^{-4}}$}

       \rput[t](0.280,0.59){\scriptsize $\mathsf{0}$}
       \rput[t](1.768,0.59){\scriptsize $\mathsf{50}$}
       \rput[t](3.261,0.59){\scriptsize $\mathsf{100}$}
       \rput[t](4.749,0.59){\scriptsize $\mathsf{150}$}
       \rput[t](6.237,0.59){\scriptsize $\mathsf{200}$}
			 \rput[t](7.732,0.59){\scriptsize $\mathsf{250}$}

			 \rput[rt](1.84,2.40){\scriptsize \textbf{\sf Alg.\ref{Alg:Conn}}}
       \rput[bl](3.66,2.42){\scriptsize \textbf{\sf \cite{Kekatos12-DistributedRobustPowerStateEstimation}}}

     \end{pspicture}
     }
     \hfill
     \subfigure[Network B with a generic connected variable]{\label{SubFig:MPC_PowerGridStableNonStar}
     \begin{pspicture}(8.0,5.4)
       \rput[bl](0.25,0.70){\includegraphics[width=7.5cm]{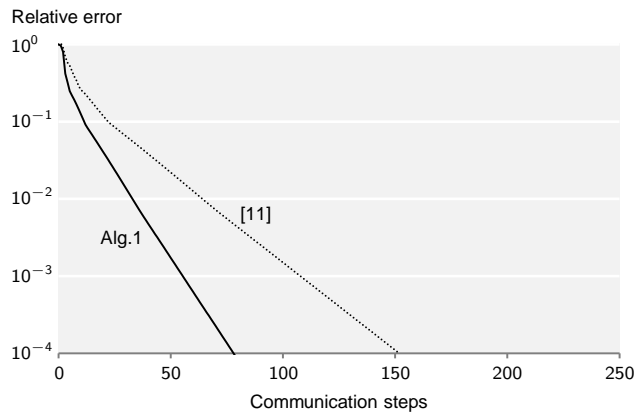}}
       \rput[b](4.00,0.001){\scriptsize \textbf{\sf Communication steps}}
       \rput[bl](-0.352,5.17){\mbox{\scriptsize \textbf{{\sf Relative error}}}}

       \rput[r](0.20,4.89){\scriptsize $\mathsf{10^{0\phantom{-}}}$}
       \rput[r](0.20,3.90){\scriptsize $\mathsf{10^{-1}}$}
       \rput[r](0.20,2.87){\scriptsize $\mathsf{10^{-2}}$}
       \rput[r](0.20,1.84){\scriptsize $\mathsf{10^{-3}}$}
       \rput[r](0.20,0.81){\scriptsize $\mathsf{10^{-4}}$}

       \rput[t](0.280,0.59){\scriptsize $\mathsf{0}$}
       \rput[t](1.768,0.59){\scriptsize $\mathsf{50}$}
       \rput[t](3.261,0.59){\scriptsize $\mathsf{100}$}
       \rput[t](4.749,0.59){\scriptsize $\mathsf{150}$}
       \rput[t](6.237,0.59){\scriptsize $\mathsf{200}$}
			 \rput[t](7.732,0.59){\scriptsize $\mathsf{250}$}

			 \rput[rt](1.39,2.40){\scriptsize \textbf{\sf Alg.\ref{Alg:Conn}}}
       \rput[bl](2.69,2.50){\scriptsize \textbf{\sf \cite{Kekatos12-DistributedRobustPowerStateEstimation}}}

     \end{pspicture}
     }
     \hspace{0.1cm}

     \vspace{-0.1cm}
     \caption{Results for MPC. The variable is connected in all cases, i.e., the subgraphs induced by all the components are connected. While in $\text{(a)}$ and $\text{(b)}$ each induced subgraph is a star, i.e., the interactions occur only between neighboring subsystems, in $\text{(c)}$ and $\text{(d)}$ each induced subgraph is generic. Only Algorithms~\ref{Alg:Conn} and~\ref{Alg:Kekatos} (\cite{Kekatos12-DistributedRobustPowerStateEstimation}) are applicable in the latter case. Alg.~\ref{Alg:Conn} was always the algorithm requiring the least number of communication steps to converge.}
     \label{Fig:Exp_MPC}
	\end{figure*}

	\begin{table}
    \centering
    \caption{
             Statistics for the networks used in MPC.
            }
    \label{Tab:Networks}
    \smallskip
        \renewcommand{\arraystretch}{1.3}
        \begin{tabular}{@{}ccccccc@{}}
          \toprule[1pt]
          Name & Source & \# Nodes & \# Edges & Diam. & \# Colors & Av. Deg.\\
          \midrule
           A & \cite{Barabasi99EmergenceOfScaling}         & $\phantom{4}100$  & $\phantom{6}196$  & $\phantom{4}6$  & $3$ & $3.92$ \\
           B & \cite{Watts98CollectiveDynamicsSmallWorld}  & $4941$ & $6594$ & $46$ & $6$ & $2.67$ \\
          \bottomrule[1pt]
        \end{tabular}
  \end{table}

\mypar{MPC: experimental setup}
	For the MPC experiments we used two networks with very different sizes. One network, which we call A, has~$100$ nodes, $196$ edges, and was generated the same way as the network for the network flow experiments: with a Barabasi-Albert model~\cite{Barabasi99EmergenceOfScaling} with parameter~$2$. The other network, named B, has $4941$ nodes and~$6594$ edges and it represents the topology of the Western States Power Grid~\cite{Watts98CollectiveDynamicsSmallWorld} (obtained in~\cite{NewmanPowerGrid}). The diameter, the number of used colors, and the average degree for these networks is shown in Table~\ref{Tab:Networks}. For coloring the networks, we used Sage~\cite{sage}.

	We solved the MPC problem~\eqref{Eq:DistributedMPC_SimpleModelFinal} and, to illustrate all the particular cases of a variable for~\eqref{Eq:PartialProb}, we created several types of data. For all the data types, the size of the state (resp. input) at each node was always~$n_p = 3$ (resp. $m_p=1$), and the time-horizon was~$T = 5$. Since~\eqref{Eq:DistributedMPC_SimpleModelFinal} has a variable of size~$m_p T P$, network A implied a variable of size~$500$ and network B implied a variable of size~$24705$. With network~A, we generated the matrices~$A_p$ so that each subsystem could be unstable; namely, we drew each of its entries from a normal distribution. With network~B, we proceeded the same way, but then ``shrunk'' the eigenvalues of each~$A_p$ to the interval $[-1,1]$, hence making each subsystem stable. All matrices~$B_{pj}$ were always generated as each~$A_p$ in the unstable case. The way we generated system couplings, i.e., the set~$\Omega_p$ for each node~$p$ (see also the dotted arrows in the networks of Fig.~\ref{Fig:MPC}), will be explained as we present the experimental results. Note that for the MPC problem~\eqref{Eq:DistributedMPC_SimpleModelFinal} the Lipschitz constant of the gradient of its objective can be computed in closed-form and, therefore, does not need to be estimated. The relative error will be computed as in the network flows: $\|x^k - x^\star\|_{\infty}/\|x^\star\|_{\infty}$, where~$x^k$ is the concatenation of all the nodes' input estimates.

\mypar{MPC results: connected case}
	The results for all the experiments on a connected variable are shown in Fig.~\ref{Fig:Exp_MPC}. There, Alg.~\ref{Alg:Conn} is compared against~\cite{Kekatos12-DistributedRobustPowerStateEstimation} (see also Algorithm~\ref{Alg:Kekatos}), and~\cite{Boyd11-ADMM}, and~\cite{Nesterov03-Lectures}. We mention that algorithms~\cite{Boyd11-ADMM,Nesterov03-Lectures} were already applied to~\eqref{Eq:DistributedMPC_SimpleModelFinal}, e.g., in~\cite{Conte12-ComputationalAspectsDistributedMPC}, in the special case of a variable with star-shaped induced subgraphs. This is in fact the only case where~\cite{Boyd11-ADMM} and~\cite{Nesterov03-Lectures} are distributed, and it explains why they are not in Figs.~\ref{SubFig:MPC_Barab100NonStarUnstable} and~\ref{SubFig:MPC_PowerGridStableNonStar}: the induced subgraphs in those figures are not stars. Only Alg.~\ref{Alg:Conn} and~\cite{Kekatos12-DistributedRobustPowerStateEstimation} are applicable in this case.

	In Fig.~\ref{SubFig:MPC_Barab100_Unstable} the network is~A and each subsystem was generated (possibly) unstable, and in Fig.~\ref{SubFig:MPC_PowerGridStable} the network is~B and each subsystem was generated stable. In both cases, Alg.~\ref{Alg:Conn} required the least number of CSs to achieve any relative error between $1$ and~$10^{-4}$, followed by~\cite{Boyd11-ADMM}, then by~\cite{Kekatos12-DistributedRobustPowerStateEstimation}, and finally by~\cite{Nesterov03-Lectures}. It can be seen from these plots that the difficulty of the problem is determined, not so much by the size of network, but by the stability of the subsystems. In fact, all algorithms required uniformly more communications to solve a problem on network~A, which has only~$100$ nodes, than on network~B, which has approximately $5000$ nodes. This difficulty can be measured by the Lipschitz constant~$L$: $1.63 \times 10^6$ for network~A (Fig.~\ref{SubFig:MPC_Barab100_Unstable}) and $3395$ for network~B (Fig.~\ref{SubFig:MPC_PowerGridStable}). Regarding the parameter~$\rho$, in Fig.~\ref{SubFig:MPC_Barab100_Unstable} it was $120$ for~\cite{Boyd11-ADMM} and~$135$ for the other algorithms (computed with precision $5$); in Fig.~\ref{SubFig:MPC_PowerGridStable}, it was~$25$ for Alg.~\ref{Alg:Conn} and~\cite{Boyd11-ADMM}, and~$30$ for Alg.~\cite{Kekatos12-DistributedRobustPowerStateEstimation} (also computed with precision $5$).

	In Figs.~\ref{SubFig:MPC_Barab100NonStarUnstable} and~\ref{SubFig:MPC_PowerGridStableNonStar} we considered a generic connected variable, where each induced subgraphs is not necessarily a star. In this case, the system couplings were generated as follows. Given a node~$p$, we assigned it~$u_p$ and we initialized a fringe with its neighbors~$\mathcal{N}_p$. Then, we selected a node randomly (with equal probability) from the fringe and made it depend on~$u_p$; we also added its neighbors to the fringe. The described process was done~$3$ times for each variable~$u_p$ (i.e., node~$p$). When each induced subgraph is not a star, only Alg.~\ref{Alg:Conn} and~\cite{Kekatos12-DistributedRobustPowerStateEstimation} are applicable. Figs.~\ref{SubFig:MPC_Barab100NonStarUnstable} and~\ref{SubFig:MPC_PowerGridStableNonStar} show their performance for network~A with unstable subsystems and for network~B with stable subsystems, respectively. It can be seen that Alg.~\ref{Alg:Conn} required uniformly less CSs than~\cite{Kekatos12-DistributedRobustPowerStateEstimation} to achieve the same relative error.

	\begin{figure}
     \centering
     \begin{pspicture}(8.0,5.4)
       \rput[bl](0.25,0.70){\includegraphics[width=7.5cm]{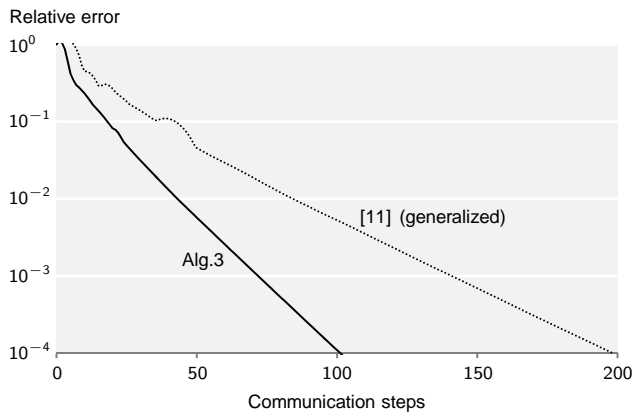}}
       \rput[b](4.00,0.001){\scriptsize \textbf{\sf Communication steps}}
       \rput[bl](-0.352,5.17){\mbox{\scriptsize \textbf{{\sf Relative error}}}}

       \rput[r](0.20,4.89){\scriptsize $\mathsf{10^{0\phantom{-}}}$}
       \rput[r](0.20,3.90){\scriptsize $\mathsf{10^{-1}}$}
       \rput[r](0.20,2.87){\scriptsize $\mathsf{10^{-2}}$}
       \rput[r](0.20,1.84){\scriptsize $\mathsf{10^{-3}}$}
       \rput[r](0.20,0.81){\scriptsize $\mathsf{10^{-4}}$}

       \rput[t](0.280,0.59){\scriptsize $\mathsf{0}$}
       \rput[t](2.150,0.59){\scriptsize $\mathsf{50}$}
       \rput[t](4.000,0.59){\scriptsize $\mathsf{100}$}
       \rput[t](5.878,0.59){\scriptsize $\mathsf{150}$}
       \rput[t](7.734,0.59){\scriptsize $\mathsf{200}$}

       \rput[rt](2.50,2.11){\scriptsize \textbf{\sf Alg.\ref{Alg:NonConn}}}
       \rput[lb](4.31,2.45){\scriptsize \textbf{\sf \cite{Kekatos12-DistributedRobustPowerStateEstimation} \,(generalized)}}

     \end{pspicture}

     \vspace{-0.1cm}
     \caption{Results for MPC when the variable is non-connected. The communication network is A and all the subsytems were designed stable.}
     \label{Fig:Exp_MPC_NonConnected}
	\end{figure}

\mypar{MPC results: non-connected case}
	A non-connected variable has at least one component whose induced subgraph~$\mathcal{G}_l = (\mathcal{V}_l,\mathcal{E}_l)$ is not connected. In this case, Algorithm~\ref{Alg:Conn} is no longer applicable and it requires a generalization, shown in Algorithm~\ref{Alg:NonConn}. Part of the generalization consists of computing Steiner trees, using the nodes in~$\mathcal{V}_l$ as required nodes. The same generalization can be made to the algorithm in~\cite{Kekatos12-DistributedRobustPowerStateEstimation}.

	To create a problem instance with a non-connected variable, we generated system couplings in a way very similar to the couplings for Figs.~\ref{SubFig:MPC_Barab100NonStarUnstable} and~\ref{SubFig:MPC_PowerGridStableNonStar}. The difference was that any node in the network could be chosen to depend on a given~$u_p$. However, any node in the fringe had twice the probability of being chosen than any other node. This process was run on network A for each one of its~$500$ components (recall that the variable size for network A is $500$), and obtained~$400$ non-connected components, i.e., $400$ components whose induced subgraphs were not connected. Then, as described in the preprocessing part of Algorithm~\ref{Alg:NonConn}, we computed Steiner trees for each non-connected component: $44\%$ of the nodes were Steiner for at least one component. To compute Steiner trees, we used a built-in Sage function~\cite{sage}. In this case, we generated all the subsystems stable. Then, we ran Algorithms~\ref{Alg:NonConn} and~\cite{Kekatos12-DistributedRobustPowerStateEstimation} (with a similar generalization) with $\rho = 35$ (computed with precision~$5$ for both algorithms). The results of these experiments are in Fig.~\ref{Fig:Exp_MPC_NonConnected}. Again, Algorithm~\ref{Alg:NonConn} required uniformly less CSs to converge than our generalization of~\cite{Kekatos12-DistributedRobustPowerStateEstimation}.

\section{Conclusions}
	We solved a class of optimization problems with the following structure: no component of the optimization variable appears in the functions of all nodes. Our approach considers two different cases, a connected and a non-connected variable, and proposes an algorithm for each. Our algorithms require a coloring scheme of the network and their convergence is guaranteed only for the special case of a bipartite network or for problems with strongly convex objectives. However, in the practical examples that we considered, the algorithm converges even when none of these conditions is met. Moreover, experimental results show that our algorithms require less communications to solve a given network flow or MPC problem to an arbitrary level of accuracy than prior algorithms.

\bibliographystyle{IEEEbib}

{ \isdraft{\singlespace}{}
\bibliography{paper}
}

\appendices

\section{Proof of Lemma~\ref{Lem:EquivalenceNodesC1}}
\label{App:EquivalenceNodesC1}

	To go from~\eqref{Eq:PartialC1_1} to~\eqref{Eq:PartialC1_2}, we first develop the last two terms of~\eqref{Eq:PartialC1_1}, respectively,
	\begin{equation}\label{App:Eq_1}
		{\lambda^k}^\top \bar{A}^1 \bar{x}^1
	\end{equation}
	and
	\begin{equation}\label{App:Eq_2}
		\frac{\rho}{2}\Bigl\|\bar{A}^1 \bar{x}^{1} + \sum_{c = 2}^C \bar{A}^c \bar{x}^{c,k}\Bigr\|^2\,.
	\end{equation}
	We first address~\eqref{App:Eq_1}. 	Given the structure of~$\bar{A}^1$, as seen in \eqref{Eq:PartialStructureMatrices}, we can write~\eqref{App:Eq_1} as $\sum_{l=1}^n ((\bar{A}_l^1)^\top \lambda_l^k)^\top \bar{x}_l^1$.	Recall that~$(\bar{A}_l^1)^\top$, if it exists (i.e., if there is a node with color~$1$ that depends on component~$x_l$), consists of the block of rows of the node-arc incidence matrix of~$\mathcal{G}_l$ corresponding to the nodes with color~$1$. Therefore, if there exists $p \in \mathcal{C}_1 \cap \mathcal{V}_l$, the vector~$(\bar{A}_l^1)^\top \lambda_l^k$ will have an entry $\sum_{j \in \mathcal{N}_p \cap \mathcal{V}_l} \text{sign}(j - p)\lambda_l^{pj,k}$. The sign function appears here because the column of the node-arc incidence matrix corresponding to $x_l^{(i)} - x_l^{(j)} = 0$, for a pair~$(i,j) \in \mathcal{E}_l$, contains $1$ in the $i$th entry and~$-1$ in the $j$th entry, where~$i<j$. In the previous expression, we used an extension of the definition of~$\lambda_l^{ij}$, which was only defined for~$i<j$ (due to our convention that for any edge~$(i,j) \in \mathcal{E}$ we have always~$i<j$). Assume~$\lambda_l^{ij}$ is initialized with zero; switching~$i$ and~$j$ in~\eqref{Eq:PartialDualVariableUpdate}, we obtain~$\lambda_l^{ji,k} = -\lambda_l^{ij,k}$, which holds for all iterations~$k$. To be consistent with the previous equation, we define~$\lambda_l^{ij}$ as $\lambda_l^{ij} := -\lambda_l^{ji}$ whenever~$i>j$. Therefore, \eqref{App:Eq_1} develops as
	\begin{align}
	    {\lambda^k}^\top \bar{A}^1 \bar{x}^1
	  &=
		  \sum_{l=1}^n ((\bar{A}_l^1)^\top \lambda_l^k)^\top \bar{x}_l^1
		  \notag
		\\
		&=
		  \sum_{l=1}^n \sum_{p \in \mathcal{C}_1} \sum_{j \in \mathcal{N}_p \cap \mathcal{V}_l} \!\!\! \text{sign}(j-p) \Bigl(\lambda_l^{pj,k}\Bigr)^\top x_l^{(p)}
		\notag
		\\
		&=
		  \sum_{p \in \mathcal{C}_1} \sum_{l=1}^n \sum_{j \in \mathcal{N}_p \cap \mathcal{V}_l} \!\!\! \text{sign}(j-p) \Bigl(\lambda_l^{pj,k}\Bigr)^\top x_l^{(p)}\,.
		\label{Eq:AppFirstTerm_1}
	\end{align}

	Regarding~\eqref{App:Eq_2}, it can be written as
	\begin{align}
	      &\frac{\rho}{2}\Bigl\|\bar{A}^1 \bar{x}^1 + \sum_{c = 2}^C \bar{A}^c \bar{x}^{c,k}\Bigr\|^2
		  \notag
		  \\
		  &=
		    \frac{\rho}{2}\Bigl\|\bar{A}^1 \bar{x}^1\Bigr\|^2 + \rho (\bar{A}^1\bar{x}^1)^\top \sum_{c=2}^C \bar{A}^c \bar{x}^{c,k}
		    + \frac{\rho}{2}\Bigl\|\sum_{c=2}^C \bar{A}^c \bar{x}^{c,k}\Bigr\|^2\,.
		  \label{Eq:AppSecondTerm_1}
	\end{align}
	Since the last term does not depend on~$\bar{x}^1$, it can be dropped from the optimization problem. We now use the structure of~$\bar{A}^1$ to rewrite the first term of~\eqref{Eq:AppSecondTerm_1}:
	\begin{align}
		  \frac{\rho}{2}\Bigl\|\bar{A}^1 \bar{x}^1\Bigr\|^2
		&=
		  \frac{\rho}{2} \sum_{l=1}^n (\bar{x}^1_l)^\top (\bar{A}_l^1)^\top \bar{A}_l^1 \bar{x}^1_l
		\label{Eq:AppSecondTerm_2}
		\\
		&=
		  \frac{\rho}{2} \sum_{l=1}^n \sum_{p \in \mathcal{C}_1} D_{p,l} \Bigl( x_l^{(p)}\Bigr)^2
		\label{Eq:AppSecondTerm_3}
		\\
		&=
		  \frac{\rho}{2} \sum_{p \in \mathcal{C}_1} \sum_{l \in S_p} D_{p,l} \Bigl( x_l^{(p)}\Bigr)^2\,.
		\label{Eq:AppSecondTerm_4}
	\end{align}
	From~\eqref{Eq:AppSecondTerm_2} to~\eqref{Eq:AppSecondTerm_3} we just used the structure of~$\bar{A}_l^1$. Namely, if it exists, $(\bar{A}_l^1)^\top \bar{A}_l^1$ is a diagonal matrix, where each diagonal entry is extracted from the diagonal of~$A_l^\top A_l$, the Laplacian matrix for~$\mathcal{G}_l$. Since each entry in the diagonal of a Laplacian matrix contains the degrees of the respective nodes, the diagonal of $(\bar{A}_l^1)^\top \bar{A}_l^1$ contains $D_{p,l}$ for all~$p \in \mathcal{C}_1$. The reason why~$(\bar{A}_l^1)^\top \bar{A}_l^1$ is diagonal is because nodes with the same color are never neighbors. As in~\eqref{Eq:AppSecondTerm_1}, we exchanged the order of the summations from~\eqref{Eq:AppSecondTerm_3} to~\eqref{Eq:AppSecondTerm_4}.

	Finally, we develop the second term of~\eqref{Eq:AppSecondTerm_1}:
	\begin{align}
		  \rho &(\bar{A}^1\bar{x}^1)^\top \sum_{c=2}^C \bar{A}^c \bar{x}^{c,k}
		\notag
		\\
		&=
		  \rho \sum_{c=2}^C \sum_{l=1}^n (\bar{x}^1_l)^\top (\bar{A}^1_l)^\top (\bar{A}^c_l) \,\bar{x}_l^{c,k}
		\label{Eq:AppSecondTerm_5}
		\\
		&=
		  -\rho \sum_{c=2}^C \sum_{l=1}^n \sum_{p \in \mathcal{C}_1} \sum_{j \in \mathcal{N}_p \cap \mathcal{C}_c \cap \mathcal{V}_l} {x_l^{(p)}}^\top x_l^{(j),k}
		\label{Eq:AppSecondTerm_6}
		\\
		&=
		  -\rho \sum_{p \in \mathcal{C}_1} \sum_{l \in S_p} {x_l^{(p)}}^\top \sum_{c=2}^C \sum_{j \in \mathcal{N}_p \cap \mathcal{C}_c \cap \mathcal{V}_l} x_l^{(j),k}
		\label{Eq:AppSecondTerm_7}
		\\
		&=
		  -\rho \sum_{p \in \mathcal{C}_1} \sum_{l \in S_p}  \sum_{j \in \mathcal{N}_p \cap \mathcal{V}_l}  {x_l^{(p)}}^\top x_l^{(j),k}\,.
		\label{Eq:AppSecondTerm_8}
	\end{align}
	In~\eqref{Eq:AppSecondTerm_5} we just used the structure of~$\bar{A}^1$ and~$\bar{A}^c$, as visualized in~\eqref{Eq:PartialStructureMatrices}. From~\eqref{Eq:AppSecondTerm_5} to~\eqref{Eq:AppSecondTerm_6} we used the fact that $(\bar{A}^1_l)^\top \bar{A}^c_l$ is a submatrix of~$A_l^\top A_l$, the Laplacian of~$\mathcal{G}_l$, containing some of its off-diagonal elements. More concretely, $(\bar{A}^1_l)^\top \bar{A}^c_l$ contains the entries of~$A_l^\top A_l$ corresponding to all the nodes~$i \in \mathcal{C}_1 \cap \mathcal{V}_l$ and~$j \in \mathcal{C}_c \cap \mathcal{V}_l$. And, for such nodes, the corresponding entry in~$A_l^\top A_l$ is~$-1$ if~$i$ and~$j$ are neighbors, and~$0$ otherwise. From~\eqref{Eq:AppSecondTerm_7} to~\eqref{Eq:AppSecondTerm_8} we just used the fact that the set $\{\mathcal{C}_c\}_{c=2}^C$ is nothing but a partition of the set of neighbors of any node with color~$1$.
	Using~\eqref{Eq:AppFirstTerm_1}, \eqref{Eq:AppSecondTerm_1}, \eqref{Eq:AppSecondTerm_4}, and~\eqref{Eq:AppSecondTerm_8} in~\eqref{Eq:PartialC1_1}, we get~\eqref{Eq:PartialC1_2}.
	\hfill
	\qed

\end{document}